\newtheorem{Def}{Definition}[section]
\newtheorem{Thm}[Def]{Theorem}
\newtheorem{Lem}[Def]{Lemma}
\newtheorem{Assumption}[Def]{Assumption}
\newtheorem{Rem}[Def]{Remark}
\newtheorem{Cor}[Def]{Corollary}
\newtheorem{Prop}[Def]{Proposition}
\newtheorem{Example}[Def]{Example}
\numberwithin{equation}{section}
\newcommand{\mca}{\mathcal{A}}
\newcommand{\mcc}{\mathcal{C}}
\newcommand{\mcf}{\mathcal{F}}
\newcommand{\mci}{\mathcal{I}}
\newcommand{\mcl}{\mathcal{L}}
\newcommand{\mbbf}{\mathbb{F}}
\newcommand{\mbbg}{\mathbb{G}}
\newcommand{\mbbn}{\mathbb{N}}
\newcommand{\mbbr}{\mathbb{R}}
\newcommand{\mbby}{\mathbb{Y}}
\newcommand{\al}{\alpha}
\newcommand{\del}{\delta}
\newcommand{\ep}{\epsilon} 
\newcommand{\vp}{\varphi}
\newcommand{\D}{\Delta}
\newcommand{\sig}{\sigma}
\newcommand{\Sig}{\Sigma}
\newcommand{\lam}{\lambda}
\newcommand{\gam}{\gamma}
\newcommand{\Gam}{\Gamma}
\newcommand{\p}{\partial}
\newcommand{\cil}{\xrightarrow{\mcl}} 
\newcommand{\cip}{\xrightarrow{P}} 
\newcommand{\argmax}{\mathop{\rm argmax}}
\def\nn{\nonumber}
\def\sumj{\sum_{j=1}^{n}}
\def\intj{\int_{t_{j-1}}^{t_j}}
\def\tz{\theta_{0}}
\def\tes{\hat{\theta}_{n}}
\def\aes{\hat{\alpha}_{n}}
\def\ges{\hat{\gamma}_{n}}
\title[Statistical inference for misspecified ergodic L\'{e}vy driven SDE models]
{Statistical inference for misspecified ergodic L\'{e}vy driven stochastic differential equation models}
\date{\today}
\keywords{L\'{e}vy driven stochastic differential equation, misspecified model, Gaussian quasi-likelihood estimation, extended Poisson equation, 
high-frequency sampling, stepwise estimation.}
\author{Yuma Uehara}
\address[Yuma Uehara, corresponding author]{The Institute of Statistical Mathematics, Japan, 10-3 Midori-cho, Tachikawa, Tokyo 190-8562, Japan}
\email{y-uehara@ism.ac.jp}
\begin{document}

\maketitle

\begin{abstract}
 We consider the estimation problem of misspecified ergodic L\'{e}vy driven stochastic differential equation models based on high-frequency samples.
We utilize a widely applicable and tractable Gaussian quasi-likelihood approach which focuses on mean and variance structure.
It is shown that the Gaussian quasi-likelihood estimators of the drift and scale parameters still satisfy polynomial type probability estimates and asymptotic normality at the same rate as the correctly specified case.
In their derivation process, the theory of extended Poisson equation for time-homogeneous Feller Markov processes plays an important role. 
Our result confirms the reliability of the Gaussian quasi-likelihood approach for SDE models.
\end{abstract}

\section{Introduction}

Recent development of measurement technique and computers enables us to get time series data observed at high frequency from financial and economic activities, physical and biological phenomena, and so on.
In analyzing them, we often encounter situations where they vividly demonstrate non-Gaussian behavior, and in such situations, statistical modeling by high-frequently observed diffusion type processes may bring us an inadequate result.
To better describe such non-Gaussianity, stochastic differential equations (SDEs) driven by L\'{e}vy processes serve as good candidate models.
For such reason, the estimation theory of L\'{e}vy driven SDE models based on high-frequency samples has been studied so far, for instance, the threshold based estimation for jump diffusion models by \cite{OgiYos11} and \cite{ShiYos06}, the least absolute deviation (LAD)-type estimation for L\'{e}vy driven Ornstein-Uhlenbeck models by \cite{Mas10}, the non-Gaussian stable quasi-likelihood estimation for locally stable driven SDE models by \cite{Mas16}, the least square estimation for small L\'{e}vy driven SDE models by \cite{LonMaShi16}, the Gaussian quasi-likelihood (GQL) for ergodic L\'{e}vy driven SDE models by \cite{Mas13-2} and \cite{MasUeh17-2}, and so on.
These are on parametric methods, and concurrently, nonparametric methods have been investigated, for example, the functional estimation and adaptive estimation for jump diffusion models by \cite{BanNgu03} and \cite{Sch14}, Nadaraya-Watson estimation for stable driven SDE models by \cite{Sch14}, and the Fourier based method for L\'{e}vy process and L\'{e}vy type model by \cite{BelRei15}, to mention few.

In statistical modeling, we always face the risk of model misspecification.
The statistical theory under model misspecification tells us how close an estimated model is to the data-generating model, and such interpretation is important, for example, in ensuring the reliability of estimation methods, and comparing candidate description models by information criterions.
Historically, following pioneering works by \cite{Ber66}, \cite{Hub67}, and \cite{Whi82}, the theory has been investigated up to the present for such reasons.
Especially about SDE models, for instance, \cite{McK84}, \cite{UchYos11} and \cite[Section 3]{Kut17} focus on misspecified diffusion models; \cite[Section 4]{Kut17} deal with the misspecification with respect to the intensity function of Poisson processes; \cite{RyaCheFra18} considers the situation where the given model is diffusion but the data-generating model has jumps.
However, the theory does not seem to be well developed in the context of L\'{e}vy driven SDE models whose coefficients take various non-linear form, and indeed the parametric methods introduced above are not discussed under model misspecification.

In this paper, the data-generating process $X$ which is defined on the complete filtered probability space $(\Omega,\mathcal{F}, (\mathcal{F}_t )_{t\in\mathbb{R}_+},P)$ is supposed to be the solution of the following L\'{e}vy driven SDE:
\begin{equation}\label{tSDE}
dX_t=A(X_t)dt+C(X_{t-})dZ_t,
\end{equation}
where:
\begin{itemize}
\item $Z$ is a one-dimensional c\`{a}dl\`{a}g L\'{e}vy process without Wiener part.
         It is independent of the initial variable $X_0$ and satisfies $E[Z_1]=0$, $Var[Z_1]=1$, and $E[|Z_1|^q]<\infty$ for all $q>0$;
\item The coefficients $A:\mbbr\mapsto\mbbr$ and $C:\mbbr\mapsto\mbbr$ are Lipschitz continuous;
\item $\mathcal{F}_t:=\sig(X_0)\vee\sig(Z_s; s\leq t)$.
\end{itemize}
We suppose that the discrete but high-frequency observations $(X_{t_0},\dots,X_{t_n})$ are obtained from $X$ in the so-called ``rapidly increasing experimental design", that is, $t_j\equiv t_j^n:=jh_n$, $T_n:=nh_n\to\infty$, and $nh_n^2\to0$.
For the observations $(X_{t_0},\dots,X_{t_n})$, we suppose that the following parametric one-dimensional SDE model is allocated:
\begin{equation}\label{pSDE}
dX_t=a(X_t,\al)dt+c(X_{t-},\gam)dZ_t,
\end{equation}
where the functional forms of the coefficients $a:\mbbr\times\Theta_\al\mapsto\mbbr$ and $c:\mbbr\times\Theta_\gam\mapsto\mbbr$ are supposed to be known except for a finite-dimensional unknown parameter $\theta:=(\gam,\al)$ being an element of the bounded convex domain $\Theta:=\Theta_\gam\times\Theta_\al\subset\mbbr^p$.
We note that the true coefficients $(C,A)(\cdot)$ may not belong to the parametric family $\{(c,a)(\cdot,\theta): \theta\in\Theta\}$, namely, the misspecification of the coefficient possibly occurs.
Hereinafter, the terminologies ``misspecified" and ``misspecification" will be used for the misspecification with respect to the coefficients unless otherwise mentioned.

To estimate an optimal parameter $\theta^\star$ of $\theta$, we utilize the GQL procedure used in \cite{MasUeh17-2}.
Concerning misspecified ergodic diffusion models, it is shown in \cite{UchYos11} that although the misspecification with respect to their diffusion term deteriorates the convergence rate of the scale (diffusion) parameter, the Gaussian quasi-maximum likelihood estimator (GQMLE) still has asymptotic normality.
We will show that asymptotic normality of the GQMLE holds in the misspecified ergodic L\'{e}vy driven SDE models as well.
To handle the misspecification effect, we will invoke the theory of the extended Poisson equation (EPE) for homogeneous Feller Markov processes established in \cite{VerKul11}.
Applying the result of \cite{VerKul11} for \eqref{tSDE}, the existence and weighted H\"{o}lder regularity of the solution of EPEs will be shown under a mighty mixing condition on $X$.
Building on the result and martingale representation theorem, we will be able to get the asymptotic normality of our estimator and its tail probability estimates under sufficient regularity and moment conditions on the ingredients of \eqref{tSDE} and \eqref{pSDE}.
We note that the absence of Wiener part in \eqref{tSDE} is essential while it is not in the correctly specified case, for more details, see Remark \ref{conj}.

\begin{table}[t]
\begin{center}
\caption{GQL approach for ergodic diffusion models and ergodic L\'{e}vy driven SDE models }
\begin{tabular}{cccc}
\hline
&&&\\[-3.5mm]
Model & \multicolumn{2}{c}{Rates of convergence}& Ref.\\
& drift  & scale\\ \hline 
correctly specified diffusion & $\sqrt{T_n}$& $\sqrt{n}$ &\cite{Kes97}, \cite{UchYos12}   \\[1.5mm]
misspecified diffusion & $\sqrt{T_n}$& $\sqrt{T_n}$ & \cite{UchYos11} \\ [1.5mm]
correctly specified L\'{e}vy driven SDE& $\sqrt{T_n}$& $\sqrt{T_n}$ &\cite{Mas13-1}, \cite{MasUeh17-2} \\ [1.5mm]
misspecified L\'{e}vy driven SDE &  $\sqrt{T_n}$ &$\sqrt{T_n}$ & this paper\\ \hline
\end{tabular}
\label{comp}
\end{center}
\end{table}

It will turn out that the convergence rate of the scale parameters is $\sqrt{T_n}$, and it is the same as the correctly specified case.
This is different from the diffusion case (cf. Table \ref{comp}).
Such difference may be caused from applying the GQL to non-Gaussian driving noises, that is, the efficiency loss of the GQMLE may occur even in the correctly specified case.
Indeed, the non-Gaussian stable quasi-likelihood is known to estimate the drift and scale parameters faster than the GQMLE in correctly specified locally $\beta$-stable driven SDE models (cf. \cite{Mas16}); each of their convergence rates are $\sqrt{n}h_n^{1-1/\beta}$ and $\sqrt{n}$, respectively.
Further, for correctly specified locally $\beta$-stable driven Ornstein-Uhlenbeck models, the LAD-type estimators of \cite{Mas10} tend to the true value at the speed of $\sqrt{n}h_n^{1-1/\beta}$ and it is also faster than that of the GQMLE.
However, in exchange for its efficiency, the GQL approach is worth considering by the following reasons:
\begin{itemize}
\item It does not include any special functions (e.g. Bessel function, Whittaker function, and so on), infinite expansion series and analytically unsolvable integrals, thus computation based on it is not relatively time-consuming.
\item It focuses only on the (conditional) mean and covariance structure, thus it does not need so much restriction on the driving noise and is robust against the noise structure. In other words, we can construct reasonable estimators of the drift and scale coefficients in the unified way if only the driving L\'{e}vy noise has moments of any order.
\end{itemize}
Our result ensures that even if the true coefficients are misspecified and take non-linear forms, the staged GQL estimation still works for L\'{e}vy driven SDE models and completely inherits its merit written in above.

The rest of this paper is organized as follows:
In Section \ref{ES}, we introduce assumptions and our estimation procedure.
Section \ref{Main} provides our main results in the following turn:
\begin{enumerate}
\item the tail probability estimates of the GQMLE (Theorem \ref{TPE});
\item the existence and weighted H\"{o}lder regularity of the solution of EPEs for L\'{e}vy driven SDEs (Proposition \ref{EPE});
\item the asymptotic normality of the GQMLE at $\sqrt{T_n}$-rate (Theorem \ref{AN}).
\end{enumerate}
A simple numerical experiment is presented in Section \ref{NE}.
We give all proofs of our results in Section \ref{AP}.




\section{Assumptions and Estimation scheme}\label{ES}
For notational convenience, we will hereafter use the following manners without any mention:
\begin{itemize} 
\item $\eta$ stands for the law of $X_0$.
\item $\bar{S}$ denotes the closure of any set $S$.
\item $\nu_0$ represents the L\'{e}vy measure of $Z$.
\item We write $x^{\otimes2}=x^\top x$ for any vector $x$.
\item $P_t(x,\cdot)$ denotes the transition probability of $X$. 
\item $\p_x$ is referred to as a differential operator for any variable $x$.
\item $E_j[\cdot]$ denotes the conditional expectation with respect to $\mcf_{t_j}$.
\item We write $Y_j=Y_{t_j}$ and $\D_j Y=Y_j-Y_{j-1}$ for any stochastic process $Y$.  
\item $x_n\lesssim y_n$ implies that there exists a positive constant $C$ being independent of $n$ satisfying $x_n\leq Cy_n$ for all large enough $n$.
\item For any matrix valued function $f$ on $\mbbr\times\Theta$, we write $f_s(\theta)=f(X_s,\theta)$; especially we write $f_j(\theta)=f(X_j,\theta)$.
\item Given a function $\rho:\mathbb{R}\to\mathbb{R}^+$ and a signed measure $m$ on a one-dimensional Borel space, we write
\begin{equation}\nn
||m||_\rho=\sup\left\{|m(f)|:\mbox{$f$ is $\mathbb{R}$-valued, $m$-measurable and satisfies $|f|\leq\rho$}\right\}.
\end{equation}
\end{itemize}

To derive our asymptotic results, we introduce some assumptions with some technical comments.
Most of them are almost the same as in \cite{Mas13-1}, \cite{MasUeh17-2}, and \cite{MasUeh17-1}, except for Assumption \ref{Moments}-(2).

\begin{Assumption}\label{Moments}
\begin{enumerate}
\item $E[Z_1]=0$, $Var[Z_1]=1$, and $E[|Z_1|^q]<\infty$ for all $q>0$.
\item The Blumenthal-Getoor index (BG-index) of $Z$ is smaller than 2, that is, 
\begin{equation*}
\beta:=\inf_\gam\left\{\gam\geq0: \int_{|z|\leq1}|z|^\gam\nu_0(dz)<\infty\right\}<2.
\end{equation*}
\end{enumerate}
\end{Assumption}

From \cite[Theorem 25.3]{Sat99}, it is easy to observe that Assumption \ref{Moments} holds if the L\'{e}vy measure $\nu_0$ admits a density $g$ with respect to Lebesgue measure satisfying that 
$g(z)=O(|z|^{-2-\del})$ as $|z|\to0$ for some $\del\in(0,1)$, and that there exist positive constants $K_0$, $K_1$ and $K_2$ such that
\begin{equation*}
g(z)\leq K_0(1+|z|^{K_1})e^{-|z|^{K_2}},
\end{equation*}
for all large enough $|z|$.
Via standardization, various L\'{e}vy processes fulfill them, for example, bilateral gamma process, normal tempered stable process, normal inverse Gaussian process, and variance gamma process.

In the derivation of the asymptotic normality of our estimator, we will evaluate the small time $L_{2-\ep}$-moment of $X$ for some $\ep>0$ (cf. Lemma \ref{FEV}) to handle the solution of extended Poisson equations which are essential to deal with the misspecification effect; thus the additional condition Assumption \ref{Moments}-(2) is imposed.


\begin{Assumption}\label{Smoothness}
\begin{enumerate}
\item The coefficients $A$ and $C$ are Lipschitz continuous and twice differentiable, and their first and second derivatives are of at most polynomial growth.
\item The drift coefficient $a(\cdot,\al^\star)$ and scale coefficient $c(\cdot,\gam^\star)$ are Lipschitz continuous, and $c(x,\gam)\neq0$ for every $(x,\gam)$.
\item For each $i \in \left\{0,1\right\}$ and $k \in \left\{0,\dots,5\right\}$, the following conditions hold:
\begin{itemize}
\item The coefficients $a$ and $c$ admit extension in $\mathcal{C}(\mathbb{R}\times\bar{\Theta})$ and have the partial derivatives $(\partial_x^i \partial_\alpha^k a, \partial_x^i \partial_\gamma^k c)$ possessing extension in $\mathcal{C}(\mathbb{R}\times\bar{\Theta})$.
\item There exists nonnegative constant $C_{(i,k)}$ satisfying
\begin{equation}\label{polynomial}
\sup_{(x,\alpha,\gamma) \in \mathbb{R} \times \bar{\Theta}_\alpha \times \bar{\Theta}_\gamma}\frac{1}{1+|x|^{C_{(i,k)}}}\left\{|\partial_x^i\partial_\alpha^ka(x,\alpha)|+|\partial_x^i\partial_\gamma^kc(x,\gamma)|+|c^{-1}(x,\gamma)|\right\}<\infty.
\end{equation}
\end{itemize}
\end{enumerate}
\end{Assumption}
We note that the first part of Assumption \ref{Moments} and Assumption \ref{Smoothness} ensures the existence of a unique c\`{a}dl\`{a}g adapted strong solution of SDE \eqref{tSDE} (cf. \cite[Theorem 6.2.3 and Theorem 6.2.9]{App09}), that is, there exists a measurable function $g$ such that $X=g(X_0,Z)$.

\begin{Assumption}\label{Stability}
\begin{enumerate}
\item
There exists a probability measure $\pi_0$ such that for every $q>0$, we can find constants $a>0$ and $C_q>0$ for which 
\begin{equation}\label{Ergodicity}
\sup_{t\in\mathbb{R}_{+}} \exp(at) ||P_t(x,\cdot)-\pi_0(\cdot)||_{h_q} \leq C_qh_q(x),
\end{equation}
for any $x\in\mathbb{R}$ where $h_q(x):=1+|x|^q$.
\item 
For any $q>0$,  we have
\begin{equation}
\sup_{t\in\mathbb{R}_{+}}E[|X_t|^q]<\infty. 
\end{equation}
\end{enumerate}
\end{Assumption}

The former property of this assumption is so-called ``$f$-exponentially ergodic" property (cf. \cite{MeyTwe93-3}), and putting together with the latter condition and the argument in \cite[Lemma 8]{Kes97} and \cite[Lemma 4.3]{Mas13-1}, it ensures the ergodic theorem, and its moment bound: for any $f$ being differentiable with derivatives of polynomial growth, we have
\begin{equation}\label{ethm}
\frac{1}{n}\sumj f_{j-1}\cip \int_\mbbr f(x)\pi_0(dx),
\end{equation}
and for any positive constant $K>0$,
\begin{equation}\label{mboun}
E\left[\left|\sqrt{T_n}\left(\frac{1}{n}\sumj f_{j-1}-\int_\mbbr f(x)\pi_0(dx)\right)\right|^K\right]<\infty.
\end{equation}
The first convergence in probability \eqref{ethm} is a standard condition assumed in the statistical theory of the ergodic processes, while the second moment bound \eqref{mboun} is not and is relatively strong.
It will be utilized for evaluating the tail probability of the staged GQL random field introduced later.
Such evaluation gives the tail probability estimates of our estimator (Theorem \ref{TPE}), and in turn, the convergence of moments of any order for it (Remark \ref{Mcon}).

The sufficient conditions of the ``$f$-exponentially ergodic" property for \eqref{tSDE} are investigated by many papers such as \cite{Kul09}, \cite{Mas07}, and \cite{Mas13-1}.
Among them, we introduce a handy one given in \cite[Section 5]{Mas13-1} in the following:
\medskip

\textbf{Condition 1} The coefficients $A$ and $C$ are of class $\mcc^1$, and globally Lipschitz, and the scale coefficient $C$ is bounded.

\smallskip
\textbf{Condition 2} The drift coefficient $A$ satisfies 
\begin{equation}
\limsup_{|x|\to\infty} \text{sgn}(x)A(x)<0,
\end{equation} 
and the scale coefficient $C(x)\neq 0$, for every $x\in\mbbr$.

\smallskip
\textbf{Condition 3} The L\'{e}vy measure $\nu_0$ of $Z$ can be decomposed as: $\nu_0=\nu_0^\star+\nu_0^\sharp$ for the two L\'{e}vy measure, where the restriction of $\nu_0^\star$ to some open set of the form $(-\ep,0)\cup (0,\ep)$ with some $\ep>0$ admits a continuously differentiable positive density $g^\star$.

\smallskip
\textbf{Condition 4} $E[Z_t]=0$ and $\int\exp(q|z|)\nu_0(dz)<\infty$ for some $q>0$.

\medskip
Under Condition 1-Condition 4, Assumption \ref{Stability} holds true and for its proof, see \cite[Proposition 5.4]{Mas13-1}.
We here note that this sufficient condition still allows the nonlinearity of the coefficients.
For example, given a L\'{e}vy process $Z$ fulfilling Condition 3 and Condition 4, the following SDEs satisfy Condition 1, Condition 2, and Assumption \ref{Smoothness}-(1):
\begin{enumerate}
\item
$dX_t=-X_tdt+\sqrt{1+X_{t-}^2}dZ_t$;
\item $\displaystyle{dX_t=-\frac{X_t}{\sqrt{1+X_{t}^2}}dt+dZ_t}$;
\item $\displaystyle{dX_t=-(X_t+2\sin X_t)dt+\frac{3+X_{t-}^2}{1+X_{t-}^2}dZ_t}$.
\end{enumerate}

\medskip
We introduce a $p\times p$-matrix $\Gam:=\begin{pmatrix}\Gam_\gam&O\\ \Gam_{\al\gam}&\Gam_\al\end{pmatrix}$ whose components are defined by:
\begin{align*}
&\Gam_\gam:=2\int_\mbbr\frac{\p_\gam^{\otimes2}c(x,\gam^\star)c(x,\gam^\star)-(\p_\gam c(x,\gam^\star))^{\otimes2}}{c^4(x,\gam^\star)}(C^2(x)-c^2(x,\gam^\star))\pi_0(dx)\\
&\qquad -4\int_\mbbr \frac{(\p_\gam c(x,\gam^\star))^{\otimes2}}{c^4(x,\gam^\star)}C^2(x)\pi_0(dx),\\
&\Gam_{\al\gam}:=2\int_\mbbr \p_\al a(x,\al^\star)\p_\gam^\top c^{-2}(x,\gam^\star)(a(x,\al^\star)-A(x))  \pi_0(dx),\\
&\Gam_\al:=-2\int_\mbbr\frac{\p_\al^{\otimes2} a(x,\al^\star)}{c^2(x,\gam^\star)}(a(x,\al^\star)-A(x))\pi_0(dx)-2\int_\mbbr\frac{(\p_\al a(x,\al^\star))^{\otimes2}}{c^2(x,\gam^\star)}\pi_0(dx).
\end{align*}
\begin{Assumption}\label{nd}
$\Gam$ is invertible.
\end{Assumption}
\medskip
We define an optimal parameter $\theta^\star:=(\gam^\star,\al^\star)$ of $\theta$ by 
\begin{equation*}
\gam^\star\in\argmax_{\gam\in\bar{\Theta}_\gam}\mbbg_1(\gam),\quad \al^\star\in\argmax_{\al\in\bar{\Theta}_\al}\mbbg_2(\al),
\end{equation*}
where $\mbbg_1: \Theta_\gam\mapsto \mbbr$ and $\mbbg_{2}:\Theta_\al\mapsto\mbbr$ are defined as follows:
\begin{align}
&\mbbg_1(\gam)=-\int_\mbbr \left(\log c^2(x,\gam)+\frac{C^2(x)}{c^2(x,\gam)}\right)\pi_0(dx),\label{KL1}\\
&\mbbg_2(\al)=-\int_\mbbr c(x,\gam^\star)^{-2}(A(x)-a(x,\al))^2\pi_0(dx). \label{KL2}
\end{align}
Note that since we impose the extension condition in Assumption \ref{Smoothness}, $\mbby(\theta):=(\mbby_1(\gam),\mbby_2(\al))$ admit extension in $\mcc(\bar{\Theta})$ as well.
Recall that the parameter space $\Theta$ is supposed to be a bounded convex domain.
We assume the following identifiability condition for $\mbbg_1(\gam)$ and $\mbbg_2(\al)$:
\begin{Assumption}\label{Identifiability}
$\theta^\star\in\Theta$,
and there exist positive constants $\chi_\gam$ and $\chi_\al$ such that for all $(\gam,\al)\in\Theta$,
\begin{align}
&\label{idg}\mbby_1(\gam):=\mbbg_1(\gam)-\mbbg_1(\gam^\star)\leq-\chi_\gam|\gam-\gam^\star|^2,\\
&\label{ida}\mbby_2(\al):=\mbbg_2(\al)-\mbbg_2(\al^\star)\leq-\chi_\al|\al-\al^\star|^2.
\end{align}
\end{Assumption}

\eqref{idg} and \eqref{ida} ensure the separability of the models which will also be used for the tail probability estimates of the staged GQL random fields, and the next remark provides a sufficient and non-stringent condition for them.

\begin{Rem}
If the optimal parameter $\theta^\star\in\Theta$ is unique, and $-\Gam_\gam$ and $-\Gam_\al$ are positive definite, \eqref{idg} and \eqref{ida} hold true for all $(\gam,\al)\in\Theta$ under Assumption \ref{Smoothness}-\ref{Stability}.
Let $\mci_1:\Theta_\gam\mapsto\mbbr^{p_\gam}\otimes\mbbr^{p_\gam}$ and $\mci_2:\Theta_\al\mapsto\mbbr^{p_\al}\otimes\mbbr^{p_\al}$ be
\begin{align*}
&\mci_1(\gam)=2\int_\mbbr\frac{\p_\gam^{\otimes2}c(x,\gam)c(x,\gam)-(\p_\gam c(x,\gam))^{\otimes2}}{c^4(x,\gam)}(C^2(x)-c^2(x,\gam))\pi_0(dx)-4\int_\mbbr \frac{(\p_\gam c(x,\gam))^{\otimes2}}{c^4(x,\gam)}C^2(x)\pi_0(dx),\\
&\mci_2(\al)=-2\int_\mbbr\frac{\p_\al^{\otimes2} a(x,\al)}{c^2(x,\gam^\star)}(a(x,\al)-A(x))\pi_0(dx)-2\int_\mbbr\frac{(\p_\al a(x,\al))^{\otimes2}}{c^2(x,\gam^\star)}\pi_0(dx).
\end{align*}
From Assumption \ref{Smoothness} and \ref{Stability}, the Lebesgue dominated convergence theorem implies that these functions are continuous.
Thus, for sufficiently small $\ep>0$, we can pick a positive constant $\del$ satisfying $U_\del(\gam^\star)\subset\Theta_\gam$ and $\inf_{\gam\in U_\del(\gam^\star)}\lam_{\text{min}}(-\mci_1(\gam))>\ep$ where $U_\del(\gam^\star)$ denotes the open ball of radius $\del$ centered at $\gam^\star$, and $\lam_{\text{min}}(-\mci_1(\gam))$ is a minimum eigenvalue of $- \mci_1(\gam)$.
Then, for every $\gam\in U_\del(\gam^\star)$, we have $\mbby_1(\gam)<-\ep|\gam-\gam^\star|^2$ by Taylor's formula.
Concerning $\gam\in \Theta_\gam\setminus U_\del(\gam^\star)$, it follows that
\begin{align*}
\mbby_1(\gam)<-\frac{\mbbg_1(\gam^\star)-\sup_{\gam\in\Theta_\gam\setminus U_\del(\gam^\star)} \mbbg_1(\gam)}{\sup_{\gam_1,\gam_2\in\Theta_\gam\setminus U_\del(\gam^\star)} |\gam_1-\gam_2|^2} |\gam-\gam^\star|^2.
\end{align*}
Hence \eqref{idg} holds true for all $\gam\in\Theta_\gam$ with 
\begin{equation*}
\chi_\gam=\ep\vee\frac{\mbbg_1(\gam^\star)-\sup_{\gam\in\Theta_\gam\setminus U_\del(\gam^\star)} \mbbg_1(\gam)}{\sup_{\gam_1,\gam_2\in\Theta_\gam\setminus U_\del(\gam^\star)} |\gam_1-\gam_2|^2}.
\end{equation*}
\eqref{ida} can be shown as well.
\end{Rem}

\medskip

From now on, we mention our estimation scheme.
Recall that we assume that the observation $(X_{t_0},\dots,X_{t_n})$ is obtained from $X$ with $t_j\equiv t_j^n:=jh_n$, $T_n:=nh_n\to\infty$, and $nh_n^2\to0$.
We define our staged GQMLE $\tes:=(\ges,\aes)$ in the following manner:
\begin{enumerate}
\item {\it Drift-free estimation of $\gamma$.} \label{step1}
Define the Maximizing-type estimator (so-called $M$-estimator) $\ges$ by
\begin{equation}
\ges\in\argmax_{\gam\in\bar{\Theta}_\gam}\mbbg_{1,n}(\gam),
\nonumber
\end{equation}
for the $\mbbr$-valued random function
\begin{equation*}
\mbbg_{1,n}(\gam):=-\frac{1}{T_n}\sumj \left\{h_n\log c^2_{j-1}(\gam)+\frac{(\D_j X)^2}{c^2_{j-1}(\gam)}\right\}.
\end{equation*}

\item {\it Weighted least square estimation of $\al$.} 
Define the least square type estimator $\aes$ by
\begin{equation}
\aes\in\argmax_{\al\in\bar{\Theta}_\al}\mbbg_{2,n}(\al),
\nonumber
\end{equation}
for the $\mbbr$-valued random function
\begin{equation}
\mbbg_{2,n}(\al):=-\frac{1}{T_n}\sumj \frac{(\D_j X-h_na_{j-1}(\al))^2}{h_nc^2_{j-1}(\ges)}.
\nonumber
\end{equation}
\end{enumerate}


\begin{Rem}\label{decay}
Although our estimation method ignores the drift term in the first stage, the effect of it asymptotically vanishes.
This is because the scale term dominates the small time behavior of $X$ in $L_2$-sense. 
Specifically, we can derive
\begin{equation*}
E_{j-1}\left[\left(\intj f_{s-}dZ_s\right)^2\right]\lesssim h_n f^2_{j-1}, \quad E_{j-1}\left[\left(\intj g_{s}ds\right)^2\right]\lesssim h_n^2 g^2_{j-1},
\end{equation*}
for suitable functions $f$ and $g$.
Indeed, it has already been shown that the asymptotic behavior of the scale estimator constructed by our manner is the same as the conventional GQL estimator in the case of correctly specified ergodic diffusion models (cf. \cite{UchYos12}) and ergodic L\'{e}vy driven SDE models (cf. \cite{MasUeh17-2}).
Such ignorance should be helpful in reducing the number of simultaneous optimization parameters, thus our estimator is expected to numerically be more stabilized and their calculation should be less time-consuming. 
Moreover, by choosing appropriate functional forms, each estimation stage is reduced to a convex optimization problem.
For example, if $a(\cdot, \cdot)$ and $c(\cdot, \cdot)$ are linear and log-linear with respect to parameters, respectively, then the above argument holds.
As for other candidates of their functional form and details, see \cite[Example 3.8]{MasUeh17-2}.
\end{Rem}

\begin{Rem}\label{optim}
We defined the optimal parameter of $\theta$ as the argmax point of $\mbbg_1(\gam)$ and $\mbbg_2(\al)$ and, the two functions are the probability limit of the Gaussian quasi-likelihoods $\mbbg_{1,n}(\gam)$ and $\mbbg_{2,n}(\al)$, respectively.
Thus, $-\mbbg_1(\gam)$ and $-\mbbg_2(\al)$ can be regarded as Kullback-Leibler (KL) divergence like quantities between the data-generating model and the parametric model $dX_t=a(X_t,\al)dt+c(X_{t-},\gam)dZ_t$. 
Here we first consider the correctly specified case, that is, there exists an element $\tz:=(\gam_0,\al_0)\in\Theta$ such that $C(x)= c(x,\gam_0)$ and $A(x)= a(x,\al_0)$ for $\pi_0$ a.s. $x$.
Fix a positive constant $b>0$.
Then, it can readily be checked that for all $x>0$, $\log x+\frac{b}{x}\geq \log b+1$, and that both sides are equivalent when $x=b$.
Hence, by Assumption \ref{Identifiability}, $\argmax_{\gam\in\bar{\Theta}_\gam}\mbbg_1(\gam)$ and $\argmax_{\al\in\bar{\Theta}_\al}\mbbg_2(\al)$ coincide with $\gam_0$ and $\al_0$, respectively.
In other words, this asserts that the data-generating model certainly attain the minimization of $-\mbbg_1(\gam)$ and $-\mbbg_2(\al)$.
By taking these insight into consideration, we can intuitively interpret the optimality of $\theta^\star$ as the parameter value which yields the closest model to the data-generating model measured by the Kullback-Leibler (KL) divergence like quantities $-\mbbg_1(\gam)$ and $-\mbbg_2(\al)$. 
\end{Rem}

\section{Main results}\label{Main}
In this section, we state our main results only for the fully misspecified case, that is, both of the true coefficients $C$ and $A$ do not belong to the parametric family $\{(c,a)(\cdot,\theta): \theta\in\Theta\}$.
Concerning the partly misspecified case (i.e. either of $C$ and $A$ is correctly specified), similar results can be derived just as the corollaries (see, Remark \ref{partly}).
All of their proofs will be given in Appendix.

The first result provides the tail probability estimates of the normalized $\tes$ which is theoretically essential such as in the deviation of an information criterion, residual analysis, and the measurement of $L_q$-prediction error.

\begin{Thm}\label{TPE}
Suppose that Assumptions \ref{Moments}-\ref{nd} hold.
Then, for any $L>0$ and $r>0$, there exists a positive constant $C_L$ such that
 \begin{equation}\label{eq: TPE}
\sup_{n\in\mbbn} P\left(\left|\sqrt{T_n}(\tes-\theta^\star)\right|>r\right)\leq \frac{C_L}{r^L}.
 \end{equation}
\end{Thm}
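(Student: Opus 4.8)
The plan is to follow the now-standard ``polynomial-type large deviation'' (PLD) machinery of Yoshida, adapted to the stepwise GQL setting of \cite{MasUeh17-2}, but carrying the extra error terms that arise from misspecification. Concretely, I would work with the normalized objective functions $\mathbb{Z}_{1,n}(u_\gamma):=\exp\{\mathbb{H}_{1,n}(\gamma^\star+u_\gamma/\sqrt{T_n})-\mathbb{H}_{1,n}(\gamma^\star)\}$ and, conditionally on $\ges$, the analogous $\mathbb{Z}_{2,n}$ built from $\mathbb{G}_{2,n}$, where $\mathbb{H}_{1,n}:=T_n\mathbb{G}_{1,n}$. Since $\ges$ and $\aes$ are the maximizers, on the event $\{|\sqrt{T_n}(\ges-\gamma^\star)|>r\}$ the sup of $\mathbb{Z}_{1,n}$ over $|u_\gamma|>r$ is $\geq 1$; hence it suffices to produce, for every $L>0$, a bound $\sup_n E[\sup_{|u_\gamma|>r}\mathbb{Z}_{1,n}(u_\gamma)]\lesssim r^{-L}$, and similarly for the second stage after conditioning. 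By Yoshida's criterion this reduces to three ingredients: (i) uniform-in-$n$ $L_q$ bounds, for every $q>0$, on the normalized score $T_n^{-1/2}\partial_\gamma\mathbb{H}_{1,n}(\gamma^\star)$; (ii) the same for the normalized observed information minus its limit, $T_n^{-1}\partial_\gamma^2\mathbb{H}_{1,n}(\gamma)+\Gamma_\gamma$, uniformly in $\gamma$, together with a tightness-type bound on higher $\gamma$-derivatives giving Sobolev control of the random field; and (iii) the non-degeneracy plus identifiability already imposed in Assumptions \ref{Identifiability} and \ref{nd}, which furnish the key ``$\mathbb{Y}_1(\gamma)\leq-\chi_\gamma|\gamma-\gamma^\star|^2$'' deterministic bound.

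The analytic heart is step (i)--(ii), and there the main obstacle is that, unlike the correctly specified case, the one-step residual $\Delta_j X-h_n a_{j-1}(\alpha^\star)$ is \emph{not} a martingale increment up to negligible order: its $E^{j-1}$-conditional mean is $\int_{t_{j-1}}^{t_j}(A_s-A_{j-1})ds + O$-terms that do not vanish after centering by the model drift $a(\cdot,\alpha^\star)$, and the conditional variance involves $C^2$ rather than $c^2(\cdot,\gamma^\star)$. I would therefore decompose $\Delta_j X = h_n A_{j-1} + \int_{t_{j-1}}^{t_j} C_{s-}\,dZ_s + R_j$ with $R_j$ a remainder, use the moment estimates flagged in Remark \ref{decay} (the $L_2$ bounds $E^{j-1}[(\int f_{s-}dZ_s)^2]\lesssim h_n f_{j-1}^2$ and $E^{j-1}[(\int g_s\,ds)^2]\lesssim h_n^2 g_{j-1}^2$, and their higher-order analogues available because $Z$ has moments of every order by Assumption \ref{Moments}), and expand $\partial_\gamma\mathbb{H}_{1,n}(\gamma^\star)$ into (a) a martingale-difference array, to which I apply the Burkholder--Davis--Gundy inequality together with the uniform moment bound $\sup_t E_0[|X_t|^q]<\infty$ from Assumption \ref{Stability}(2), and (b) a ``drift-type'' sum $\sum_j E^{j-1}[\cdots]$ whose leading term, after the law-of-large-numbers reduction, converges to $0$ because $\gamma^\star$ is the maximizer of $\mathbb{G}_1$ (first-order condition on \eqref{KL1}); the fluctuation of this LLN step is where the ergodic mixing bound \eqref{Ergodicity} enters, via the Markov-process law of large numbers for additive functionals of $X$ with polynomially-weighted Hölder test functions. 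For step (ii) the same template applies to the second and higher $\gamma$-derivatives of $c^2_{j-1}(\gamma)^{-1}$, all of which are polynomially bounded in $x$ uniformly in $\gamma$ by \eqref{polynomial}, so the resulting additive functionals again have finite moments of every order and satisfy the weighted ergodic theorem; the uniform-in-$\gamma$ control needed for the Sobolev embedding is obtained by differentiating once more in $\gamma$ (Assumption \ref{Smoothness} gives up to five $\gamma$-derivatives, which is more than enough) and taking $L_q$ norms.

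Once the first stage is done, I would pass to the second stage by conditioning on $\ges$: write $\mathbb{G}_{2,n}(\alpha)=\mathbb{G}_{2,n}(\alpha)\big|_{\gamma=\ges}$ and Taylor-expand in $\gamma$ around $\gamma^\star$, so that $\mathbb{G}_{2,n}(\alpha)-\mathbb{G}_{2,n}(\alpha)\big|_{\gamma=\gamma^\star}$ is controlled by $|\ges-\gamma^\star|$ times a tight random field; since the first-stage conclusion \eqref{eq: TPE} (for $\gamma$ alone) already gives $\sqrt{T_n}(\ges-\gamma^\star)=O_{L_q}(1)$ for every $q$, this perturbation is harmless for the PLD bound. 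The remaining argument for $\aes$ is then formally identical: the score $T_n^{-1/2}\partial_\alpha\mathbb{H}_{2,n}(\alpha^\star)|_{\gamma^\star}$ splits into a BDG-controlled martingale part plus a drift part vanishing because $\alpha^\star$ maximizes \eqref{KL2}, the observed information converges to $\Gamma_\alpha$ (the $\Gamma_{\alpha\gamma}$ block appearing precisely from the $\gamma$-expansion above), and Assumption \ref{Identifiability}'s bound $\mathbb{Y}_2(\alpha)\leq-\chi_\alpha|\alpha-\alpha^\star|^2$ closes the Yoshida scheme. Finally, assembling the two stagewise estimates and using $|\sqrt{T_n}(\tes-\theta^\star)|\leq|\sqrt{T_n}(\ges-\gamma^\star)|+|\sqrt{T_n}(\aes-\alpha^\star)|$ yields \eqref{eq: TPE} with a combined constant $C_L$. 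I expect the genuinely delicate point to be the uniform-in-$n$, every-order $L_q$ control of the misspecification-induced drift remainders in the score — establishing that centering by the model coefficients at $\theta^\star$ really does kill the leading non-martingale term, with a fluctuation that is $o(1)$ in every $L_q$ — for which the extended-Poisson-equation representation (invoked later in the paper for the CLT) is not yet needed here, but the weighted ergodicity \eqref{Ergodicity} and the full moment bounds of Assumptions \ref{Moments} and \ref{Stability} are used essentially.
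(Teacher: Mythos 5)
Your proposal is correct and follows essentially the same route as the paper: both reduce the claim to Yoshida's polynomial-type large deviation criterion via uniform-in-$n$ $L_q$ bounds on the normalized score, the observed information minus $\Gamma$, higher parameter-derivatives (controlled by Sobolev's inequality), and $\sqrt{T_n}(\mbby_{i,n}-\mbby_i)$, with the misspecification-induced drift term handled exactly as you describe — it is a centered additive functional by the first-order condition at $\theta^\star$, whose moments are bounded via the exponential mixing bound \eqref{Ergodicity}, and the second stage is treated by a Taylor expansion in $\gamma$ around $\gamma^\star$ combined with the first-stage tail estimate. The only cosmetic difference is that the paper cites ready-made moment lemmas (from Masuda 2013 and Masuda--Uehara) for the martingale and ergodic-average parts rather than rederiving them with BDG and the weighted ergodic theorem.
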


In the correctly specified case, such estimates are already shown in \cite{MasUeh17-2} under a sufficient moment and regularity conditions, and strong identifiability conditions, and this theorem extends the results to the misspecified case.

Before we state the asymptotic normality of $\tes$, we roughly explain how the misspecification effect arises in its derivation process, and introduce the useful tool to deal with it.
Except for $o_p(1)$ term, each scaled quasi-score function can be decomposed as:
\begin{equation}\label{me}
\mbox{(scaled quasi-score function)} = (\mbox{stochastic integral})+(\mbox{misspecification effect term}),
\end{equation}
where the misspecification effect term is expressed as:
\begin{equation}\label{met}
\sqrt{\frac{h_n}{n}}\sumj g_{j-1}(\theta^\star)=\frac{1}{\sqrt{T_n}}\int_0^{T_n} g_s(\theta^\star)ds+o_p(1),
\end{equation}
with a specific measurable function $g$ satisfying $\pi_0(g)=0$.
The celebrated CLT-type theorems for such single functional integration of Markov processes have been reported in many literatures, for example, \cite[Theorem 2.1]{Bha82}, \cite[Theorem V\hspace{-.1em}I\hspace{-.1em}I\hspace{-.1em}I  3.65]{JacShi03}, \cite[Theorem 2.1]{KomWal12}, \cite[Corollary 4.1]{VerKul13}, and the references therein.
However the combination with the stochastic integral makes it difficult to clarify the asymptotic behavior of the left-hand-side.
To handle this difficulty, we invoke the concept of the extended Poisson equation (EPE) introduced in \cite{VerKul11}:
\begin{Def}\cite[Definition 2.1]{VerKul11}
We say that a measurable function $f:\mbbr\to\mbbr$ belongs to the domain of the extended generator $\tilde{\mca}$ of a c\`{a}dl\`{a}g homogeneous Feller Markov process $Y$ taking values in $\mbbr$ if there exists a measurable function $g:\mbbr\to\mbbr$ such that the process
\begin{equation*}
f(Y_t)-\int_0^t g(Y_s)ds, \qquad t\in\mbbr^+,
\end{equation*}
is well defined and is a local martingale with respect to the natural filtration of $Y$ and every measure $P_x(\cdot):=P(\cdot|Y_0=x),\ x\in\mbbr$.
For such a pair $(f,g)$, we write $f\in\mbox{Dom}(\tilde{\mca})$ and $\tilde{\mca} f\overset{EPE}=g$.
\end{Def}

\begin{Rem}
In the previous definition, the terminology ``Feller" means that the corresponding transition semigroup $T_t$ is a mapping $C_b(\mbbr)$ into $C(\mbbr)$.
When it comes to $X$, its homogeneous, Feller and (strong) Markov properties are guaranteed by the argument in \cite[Theorem 6.4.6]{App09} and \cite[3.1.1 (ii)]{Mas07}.
\end{Rem} 

\begin{Rem}
When we consider the misspecified ergodic diffusion models, we also encounter the annoying integral term like \eqref{met}.
In that case, \cite{UchYos11} utilized the theory of the second order differential equations endowed with their infinitesimal generator (cf. \cite{ParVer01}) and It\^{o}'s formula to derive the asymptotic normality of the GQMLE. 
However, in our case, the same method cannot be applied since the infinitesimal generator of $X$ contains the integro-operator with respect to the L\'{e}vy measure of $Z$ and it is difficult to verify the existence and regularity of the corresponding equation.
\end{Rem}

Hereinafter $y^{(i)}$ is referred to as the $i$-th component of any vector $y$.
We consider the following EPEs:
\begin{align}
\label{u1}
\tilde{\mca} f_1^{(j_1)}(x)&\overset{EPE}=-\frac{\p_{\gam^{(j_1)}} c(x,\gam^\star)}{c^3(x,\gam^\star)}(c^2(x,\gam^\star)-C^2(x)),\\
\label{u2}
\tilde{\mca} f_2^{(j_2)}(x)&\overset{EPE}=-\frac{\p_{\al^{(j_2)}} a(x,\al^\star)}{c^2(x,\gam^\star)}(A(x)-a(x,\al^\star)),
\end{align}
for the extended generator $\tilde{\mca}$ of $X$, $j_1\in\{1,\dots,p_\gam\}$ and $j_2\in\{1,\dots,p_\al\}$.
The right-hand-side of each EPE corresponds to $g$ in \eqref{me}, and it is trivial that they identically 0 when the coefficients are correctly specified.

From now on, $E^x$ is referred to as the expectation operator with the initial condition $X_0=x$, that is,
\begin{equation*}
E^x[g(X_t)]=\int_\mbbr g(y)P_t(x,dy),
\end{equation*}
for any measurable function $g$.
The next proposition ensures the existence of the solutions of \eqref{u1} and \eqref{u2} and verifies their weighted H\"{o}lder continuity:
\begin{Prop}\label{EPE}
Under Assumption \ref{Moments}-\ref{Stability},
there exist unique solutions of \eqref{u1} and \eqref{u2}, and the solution vectors $f_1:=\left(f_1^{(j_1)}\right)_{j_1\in\{1,\dots,p_\gam\}}$ and $f_2:=\left(f_2^{(j_2)}\right)_{j_2\in\{1,\dots,p_\al\}}$ satisfy
\begin{equation*}
\sup_{x,y\in\mbbr, x\neq y}\frac{|f_i(x)-f_i(y)|}{|x-y|^{1/{p_i}}(1+|x|^{q_iK_i}+|y|^{q_iK_i})}<\infty, \quad \mbox{for} \ i\in\{1,2\}, 
\end{equation*}
where any $p_i\in(1,\infty)$, $q_i={p_i}/(p_i-1)$, and some positive constants $K_1$ and $K_2$.
Furthermore, 
\begin{equation*}
f_1(X_t)+\int_0^t \frac{\p_\gam c(X_s,\gam^\star)}{c^3(X_s,\gam^\star)}(c^2(X_s,\gam^\star)-C^2(X_s))ds,
\end{equation*}
and
\begin{equation*}
 f_2(X_t)+\int_0^t \frac{\p_\al a(X_s,\al^\star)}{c^2(X_s,\gam^\star)}(A(X_s)-a(X_s,\al^\star))ds
\end{equation*}
are $L_2$-martingale with respect to $(\mcf_t, P_x)$ for every $x\in\mbbr$, and their explicit forms are given as follows:
\begin{align*}
f_1(x)&=\int_0^\infty E^x\left[\frac{\p_\gam c(X_t,\gam^\star)}{c^3(X_t,\gam^\star)}(c^2(X_t,\gam^\star)-C^2(X_t))\right]dt,\\
f_2(x)&=\int_0^\infty E^x\left[\frac{\p_\al a(X_t,\al^\star)}{c^2(X_t,\gam^\star)}(A(X_t)-a(X_t,\al^\star))\right]dt.
\end{align*}
\end{Prop}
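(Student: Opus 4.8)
\textbf{Proof proposal for Proposition \ref{EPE}.}

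The plan is to invoke the general existence, uniqueness and regularity theory for extended Poisson equations of \cite{VerKul11}, after checking that its hypotheses are met by $X$ and by the right-hand sides appearing in \eqref{u1} and \eqref{u2}. First I would verify that $X$ is a c\`adl\`ag homogeneous Feller (indeed strong) Markov process — which, as noted in the remark above, follows from \cite[Theorem 6.4.6]{App09} and \cite[3.1.1 (ii)]{Mas07} — and that the exponential $\beta$-mixing / exponential ergodicity bound \eqref{Ergodicity} of Assumption \ref{Stability} holds with the polynomial weight functions $h_q(x)=1+|x|^q$. Next I would check that the functions
\begin{equation*}
G_1^{(j_1)}(x)=-\frac{\p_{\gam^{(j_1)}}c(x,\gam^\star)}{c^3(x,\gam^\star)}\bigl(c^2(x,\gam^\star)-C^2(x)\bigr), \qquad G_2^{(j_2)}(x)=-\frac{\p_{\al^{(j_2)}}a(x,\al^\star)}{c^2(x,\gam^\star)}\bigl(A(x)-a(x,\al^\star)\bigr)
\end{equation*}
are of at most polynomial growth — this is immediate from the polynomial-growth bounds in Assumption \ref{Smoothness}(1),(3) together with the lower bound on $c^{-1}$ in \eqref{polynomial} — and, crucially, that they are centered: $\pi_0(G_i^{(j_i)})=0$ for each component. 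The centering is exactly the first-order optimality condition for $\gam^\star$ in \eqref{KL1} and for $\al^\star$ in \eqref{KL2}: differentiating $\mbbg_1$ and $\mbbg_2$ under the integral sign (legitimate by dominated convergence using Assumption \ref{Smoothness} and the moment bounds of Assumption \ref{Stability}) and setting the derivative to zero at the interior optimizer yields precisely $\int G_i^{(j_i)}\,d\pi_0=0$.

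With these ingredients in place, the existence of a unique solution and the candidate representation $f_i(x)=\int_0^\infty E^x[-G_i(X_t)]\,dt$ follow from the corresponding theorem in \cite{VerKul11}: the integral converges absolutely because, by \eqref{Ergodicity} applied with a weight $h_q$ dominating $|G_i|$ and using $\pi_0(G_i)=0$, one has $|E^x[G_i(X_t)]|=|P_tG_i(x)-\pi_0(G_i)|\le \|P_t(x,\cdot)-\pi_0\|_{h_q}\le C_q e^{-at}h_q(x)$, which is integrable in $t$ and of polynomial growth in $x$. The same bound, fed into the Hölder-type regularity estimate of \cite[the relevant theorem]{VerKul11} (which trades exponential decay of the mixing against a fractional spatial modulus of continuity and a polynomial weight), delivers the stated weighted Hölder bound
\begin{equation*}
\sup_{x,y\in\mbbr}\frac{|f_i(x)-f_i(y)|}{|x-y|^{1/p_i}(1+|x|^{q_iL_i}+|y|^{q_iL_i})}<\infty
\end{equation*}
for any $p_i\in(1,\infty)$, with $q_i=p_i/(p_i-1)$ and some $L_i>0$ depending on the polynomial exponents $C_{(i,k)}$ and on $q$. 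Finally, the $L_2$-martingale property of $f_i(X_t)+\int_0^t G_i(X_s)\,ds$ under $(\mcf_t,P_x)$ is part of the conclusion of the EPE theory (it is essentially the defining property together with a moment upgrade): the local martingale from the definition of $\tilde{\mca}$ is a true $L_2$-martingale because, by Assumption \ref{Stability}(2) and the polynomial growth of $f_i$ and of $G_i$, both $\sup_t E^x[f_i(X_t)^2]$ and $\sup_t E^x[(\int_0^t G_i(X_s)\,ds)^2]$ are finite on bounded time intervals, so a standard localization-plus-uniform-integrability argument removes the localizing sequence.

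I expect the main obstacle to be the rigorous verification that \cite{VerKul11} applies in our precise setting — in particular, matching our polynomial weight functions $h_q$ and our class of right-hand sides to the abstract hypotheses there (their conditions are phrased in terms of a fixed weight $\rho$ and a resolvent/mixing bound), and extracting from their regularity statement the explicit pairing $q_i=p_i/(p_i-1)$ together with an admissible power $L_i$. A secondary technical point is justifying the differentiation under the integral sign that gives $\pi_0(G_i)=0$, and ensuring the dominating functions used there are $\pi_0$-integrable; but this is routine given Assumptions \ref{Smoothness} and \ref{Stability}. Everything else — Feller property, polynomial growth, absolute convergence of the representation integral, and the local-to-true martingale upgrade — is bookkeeping built on the assumptions already in force.
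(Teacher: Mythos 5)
Your overall route coincides with the paper's: verify the hypotheses of \cite[Theorems 3.1.1 and 3.1.3]{VerKul11}, obtain the centering of the source functions from the first-order condition at the interior optimizer $\theta^\star$, and read off the representation $f_i(x)=\int_0^\infty E^x[-G_i(X_t)]\,dt$ together with the martingale property. However, the two verifications you defer as ``the main obstacle'' are precisely the substance of the proof, and one of them needs an idea you have not supplied. The hypotheses of \cite{VerKul11} are not phrased in terms of the weighted total-variation-type bound \eqref{Ergodicity}: they require a decay estimate for the coupling (Kantorovich--Rubinstein/Wasserstein) distance $W(P_t(x,\cdot),\pi_0)$ associated with $d(x,y)=|x-y|$. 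The paper devotes a separate lemma to proving $W(P_t(x,\cdot),\pi_0)\le C_q\exp(-at)(1+|x|^q)$, and the mechanism is the Kantorovich--Rubinstein duality theorem \cite[Theorem 11.8.2]{Dud02}: the supremum over $1$-Lipschitz test functions is, after subtracting $f(0)$, dominated by the supremum over functions bounded by $h_q$, to which \eqref{Ergodicity} applies. Your argument only ever uses \eqref{Ergodicity} directly (to get absolute convergence of the representation integral), so the mixing input that \cite{VerKul11} actually consumes is never established.

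The second gap concerns the regularity of the source. You check only polynomial growth of $G_i$, but the weighted H\"{o}lder modulus of the solution $f_i$ is inherited from a weighted H\"{o}lder modulus of $G_i$; it is not generated by the mixing decay alone, and your phrase ``trades exponential decay of the mixing against a fractional spatial modulus'' misattributes where the exponent $1/p_i$ comes from. The paper first derives $|g(x)-g(y)|\le D(2+|x|^{L}+|y|^{L})|x-y|$ from Assumption \ref{Smoothness}, then converts it, via $|x-y|^{1-1/p}\le |x|^{1-1/p}+|y|^{1-1/p}$, into a $1/p$-H\"{o}lder bound with weight $2+|x|^{L+1-1/p}+|y|^{L+1-1/p}$; this, combined with the moment bound $\int h_L(y)P_t(x,dy)\lesssim h_L(x)$ and the Wasserstein lemma, is what identifies the weights $\phi,\psi,\chi$ of \cite{VerKul11} and yields the conjugate exponents $q_i=p_i/(p_i-1)$ and the constants $L_i$ in the statement. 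The remaining items --- Feller property, centering via differentiation under the integral, and the upgrade of the local martingale to an $L_2$-martingale --- are handled essentially as you propose.
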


\begin{Rem} 
Thanks to the result of the previous theorem and assumptions on the coefficients, 
\begin{equation*}
f_1(X_t)+\int_0^t \frac{\p_\gam c(X_s,\gam^\star)}{c^3(X_s,\gam^\star)}(c^2(X_s,\gam^\star)-C^2(X_s))ds,
\end{equation*}
and 
\begin{equation*}
f_2(X_t)+\int_0^t \frac{\p_\al a(X_s,\al^\star)}{c^2(X_s,\gam^\star)}(A(X_s)-a(X_s,\al^\star))ds 
\end{equation*}
have finite second-order moments.
Thus, slightly refining the argument in \cite[the proof of Proposition V\hspace{-.1em}I\hspace{-.1em}I 1.6]{RevYor99} with the monotone convergence theorem, the $L_2$-martingale property of them with respect to $(\mcf_t, P_x)$ can be replaced by the  $L_2$-martingale property with respect to $(\mcf_t, P)$ in the previous proposition. 
\end{Rem}

Building on the previous proposition, now we can obtain the asymptotic normality of $\sqrt{T_n}(\tes-\theta^\star)$:
\begin{Thm}\label{AN}
Under Assumptions \ref{Moments}-\ref{nd},
there exists a nonnegative definite matrix $\Sig\in\mbbr^p\otimes\mbbr^p$ such that
\begin{equation*}
\sqrt{T_n}(\tes-\theta^\star)\overset{\mcl}\longrightarrow N(0,\Gam^{-1}\Sig(\Gam^{-1})^\top),
\end{equation*}
and the form of $\Sig:=\begin{pmatrix}\Sig_\gam&\Sig_{\al\gam}\\\Sig_{\al\gam}^\top&\Sig_{\al}\end{pmatrix}$ is given by:
\begin{align*}
&\Sig_\gam=4\int_\mbbr\int_\mbbr\left(\frac{\p_\gam c(x,\gam^\star)}{c^3(x,\gam^\star)}C^2(x)z^2+f_1(x+C(x)z)-f_1(x)\right)^{\otimes2}\pi_0(dx)\nu_0(dz),\\
&\Sig_{\al\gam}=-4\int_\mbbr\int_\mbbr\left(\frac{\p_\gam c(x,\gam^\star)}{c^3(x,\gam^\star)}C^2(x)z^2+f_1(x+C(x)z)-f_1(x)\right)\\
&\qquad \qquad \quad\left(\frac{\p_\al a(x,\al^\star)}{c^2(x,\gam^\star)}C(x)z+f_2(x+C(x)z)-f_2(x)\right)^\top\pi_0(dx)\nu_0(dz),\\
&\Sig_\al=4\int_\mbbr\int_\mbbr\left(\frac{\p_\al a(x,\al^\star)}{c^2(x,\gam^\star)}C(x)z+f_2(x+C(x)z)-f_2(x)\right)^{\otimes2}\pi_0(dx)\nu_0(dz).
\end{align*}
\end{Thm}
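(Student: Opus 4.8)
The plan is to run a standard Taylor-expansion argument for $M$-estimators, stagewise, and then deal with the intractable drift-free terms via the EPE solutions from Proposition \ref{EPE}. First I would establish the consistency $\tes\to\theta^\star$ in probability as a corollary of Theorem \ref{TPE}; since $\theta^\star$ is interior, the estimating equations $\p_\gam\mbbg_{1,n}(\ges)=0$ and $\p_\al\mbbg_{2,n}(\aes)=0$ hold with probability tending to one. A two-term Taylor expansion then gives, schematically, $\sqrt{T_n}(\ges-\gam^\star)=-\left(\p_\gam^{\otimes2}\mbbg_{1,n}(\tilde\gam)\right)^{-1}\sqrt{T_n}\,\p_\gam\mbbg_{1,n}(\gam^\star)$, and likewise for $\aes$, with the cross term feeding the plug-in $\ges$ into the $\al$-equation. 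The Hessian part is the routine half: by the ergodic theorem under Assumption \ref{Stability} together with the moment bounds and the small-time estimates in Remark \ref{decay}, $\p_\gam^{\otimes2}\mbbg_{1,n}(\tilde\gam)\cip \Gam_\gam$, $\p_\al^{\otimes2}\mbbg_{2,n}(\tilde\al)\cip\Gam_\al$, and the mixed derivative converges to $\Gam_{\al\gam}$; assembling the stagewise expansions into the block-triangular form yields the factor $\Gam^{-1}$. Invertibility of $\Gam$ is Assumption \ref{nd}.

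The heart of the matter is the joint CLT for the score vector $\sqrt{T_n}(\p_\gam\mbbg_{1,n}(\gam^\star),\p_\al\mbbg_{2,n}(\gam^\star,\al^\star))$. Here I would expand each $\D_j X$ using \eqref{tSDE}: writing $\D_j X = h_n A_{j-1} + C_{j-1}\D_j Z + (\text{remainder})$, substituting, and using $E^{j-1}[(\D_j Z)^2]=h_n$, the score decomposes as in the displayed identity in the Introduction: a sum of martingale differences, plus an \emph{intractable term} of the form $\sqrt{h_n/n}\sumj f_{j-1}(\theta^\star)$ where, for the $\gam$-block, $f$ is (up to constants) $\p_\gam c(x,\gam^\star)(c^2(x,\gam^\star)-C^2(x))/c^3(x,\gam^\star)$, and for the $\al$-block, $\p_\al a(x,\al^\star)(A(x)-a(x,\al^\star))/c^2(x,\gam^\star)$ — precisely the right-hand sides of \eqref{u1}–\eqref{u2}, and each has $\pi_0$-integral zero by the definition of $\theta^\star$. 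Invoking Proposition \ref{EPE}, I replace $h_n\sumj f_{j-1} = \int_0^{T_n}f_s\,ds + o_p(\sqrt{T_n})$ by a telescoping martingale: since $f_i(X_t)+\int_0^t(\tilde{\mca}f_i)(X_s)\,ds$ is an $L_2$-martingale, $\frac1{\sqrt{T_n}}\int_0^{T_n}(\tilde{\mca}f_i)(X_s)\,ds = -\frac1{\sqrt{T_n}}\big(f_i(X_{T_n})-f_i(X_0)\big) + \frac1{\sqrt{T_n}}M_{T_n}$; the boundary term is $o_p(1)$ by the moment bound in Assumption \ref{Stability}(2), and $M_{T_n}$ is a martingale whose increments combine additively with the original martingale-difference part of the score. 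The time-discretization error in passing from $h_n\sumj f_{j-1}$ to the continuous integral is controlled using the weighted Hölder regularity of $f_i$ from Proposition \ref{EPE} together with $nh_n^2\to0$ and the uniform moment bounds.

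Having reduced everything to a single martingale (with respect to $(\mcf_{t_j})$ after a final discretization of the continuous martingale $M$, or directly via a martingale CLT in continuous time), I would apply a martingale central limit theorem: the Lyapunov/Lindeberg condition follows from the moment assumptions (all $q$-moments of $Z_1$ and of $X_t$ finite, plus the polynomial-growth control \eqref{polynomial} and Hölder bounds), and the predictable quadratic variation converges, by the ergodic theorem, to a deterministic matrix $\Sig$. Computing that limit is a bookkeeping exercise: the jump part of $X$ contributes, for each jump of size $C(X_{s-})z$, the increment $\frac{\p_\gam c}{c^3}C^2 z^2 + f_1(X_{s-}+C(X_{s-})z)-f_1(X_{s-})$ in the $\gam$-block and the analogous expression with $\frac{\p_\al a}{c^2}Cz$ and $f_2$ in the $\al$-block, so that after integrating against the Lévy measure $\nu_0$ and the invariant law $\pi_0$ one obtains exactly the stated $\Sig_\gam,\Sig_{\al\gam},\Sig_\al$ (the factor $4$ coming from the square of the $2$ in the quasi-score derivatives). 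Finally, Slutsky combined with the Hessian convergence gives $\sqrt{T_n}(\tes-\theta^\star)\cil N(0,\Gam^{-1}\Sig(\Gam^{-1})^\top)$.

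The main obstacle is the middle step: rigorously justifying the replacement of the intractable term by the EPE-martingale and controlling the discretization error uniformly. This is where the weighted Hölder continuity of $f_1,f_2$ (exponent $1/p_i$ with polynomially growing weight) is essential — it lets one bound $E^{j-1}|f_i(X_{t_j})-f_i(X_{t_{j-1}})|^2$ by $h_n^{1/p_i}$ times a polynomial in $X_{t_{j-1}}$, which, summed over $j$ and combined with $nh_n^2\to0$, is what makes the error negligible; getting the exponents and moment weights to line up (and checking the martingale increments of $M$ indeed have the claimed conditional covariance structure in the small-$h_n$ limit) is the delicate accounting that the rest of the proof rests on.
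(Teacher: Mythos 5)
Your overall architecture matches the paper's: decompose each score at $\theta^\star$ into an explicit martingale part plus the centered intractable term, telescope the latter via the EPE solutions $f_1,f_2$ of Proposition \ref{EPE} into a negligible boundary term plus an $L_2$-martingale, combine, and close with a CLT together with the Hessian/ergodic-theorem step and Slutsky. The consistency, the Taylor expansion, the block-triangular assembly, the boundary term, and the discretization errors are all handled as you describe, and your final formulas for the per-jump increments are the correct ones.

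The gap is in the step you defer as ``delicate accounting.'' You assert that the EPE martingale $M$ contributes, per jump, $f_i(X_{s-}+C(X_{s-})z)-f_i(X_{s-})$, so that $M$ ``combines additively'' with the explicit martingale part of the score and the predictable quadratic variation can then be computed by ergodicity. But Proposition \ref{EPE} only tells you that $f_i(X_t)+\int_0^t(\cdots)ds$ is \emph{some} $L_2$-martingale; it says nothing about what its jumps are, whether it carries a continuous martingale component, or how it covaries with the explicit term $\sum_j \frac{\p_\gam c_{j-1}}{c^3_{j-1}}C^2_{j-1}(h_n-(\D_j Z)^2)$. Until $M$ is identified, neither $\Sig$ nor a Lindeberg/Lyapunov condition can be computed. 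The paper closes this by invoking the martingale representation theorem for the filtration generated by the Poisson random measure of $Z$ (Jacod--Shiryaev, Theorem III.4.34): because $Z$ has no Wiener part, every $L_2$-martingale on this filtration is a stochastic integral against $\tilde{N}(ds,dz)$ and hence purely discontinuous, and the candidate $\int\int\bigl(f_i(X_{s-}+C_{s-}z)-f_i(X_{s-})\bigr)\tilde{N}(ds,dz)$ is then shown to coincide with $M$ by checking that the quadratic variation of the difference vanishes. This identification is exactly where the absence of a Wiener component in \eqref{tSDE} is essential (Remark \ref{conj}), and it is the missing idea in your plan; a generic martingale CLT cannot be launched before it. Once it is in place, the paper applies its bespoke CLT for Poisson stochastic integrals (Lemma \ref{CLTPRM}) rather than a discrete-time martingale CLT, but that choice is cosmetic.
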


\begin{Rem}\label{partly}
If either of the coefficients is correctly specified, the right-hand side of the associated EPE \eqref{u1} or \eqref{u2} is identically 0.
Let $\gam_0$ and $\al_0$ be the elements of $\Theta_\gam$ and $\Theta_\al$ whose definitions are introduced in Rem \ref{optim}.
Then we have
\begin{align*}
&\Sig_\gam=4\int_\mbbr\left(\frac{\p_\gam c(x,\gam_0)}{c(x,\gam_0)}\right)^{\otimes2}\pi_0(dx)\int_\mbbr z^4\nu_0(dz),\\
&\Sig_{\al\gam}=-4\int_\mbbr\int_\mbbr\left(\frac{\p_\gam c(x,\gam_0)}{c(x,\gam_0)}z^2\right)\left(\frac{\p_\al a(x,\al^\star)}{c(x,\gam_0)}z+f_2(x+c(x,\gam_0)z)-f_2(x)\right)^\top\pi_0(dx)\nu_0(dz),\\
&\Sig_\al=4\int_\mbbr\int_\mbbr\left(\frac{\p_\al a(x,\al^\star)}{c(x,\gam_0)}z+f_2(x+c(x,\gam_0)z)-f_2(x)\right)^{\otimes2}\pi_0(dx)\nu_0(dz),
\end{align*}
in the case that the scale coefficient is correctly specified and
\begin{align*}
&\Sig_\gam=4\int_\mbbr\int_\mbbr\left(\frac{\p_\gam c(x,\gam^\star)}{c^3(x,\gam^\star)}C^2(x)z^2+f_1(x+C(x)z)-f_1(x)\right)^{\otimes2}\pi_0(dx)\nu_0(dz),\\
&\Sig_{\al\gam}=-4\int_\mbbr\int_\mbbr\left(\frac{\p_\gam c(x,\gam^\star)}{c^3(x,\gam^\star)}C^2(x)z^2+f_1(x+C(x)z)-f_1(x)\right)\left(\frac{\p_\al a(x,\al_0)}{c^2(x,\gam^\star)}C(x)z\right)^\top\pi_0(dx)\nu_0(dz),\\
&\Sig_\al=4\int_\mbbr\left(\frac{\p_\al a(x,\al_0)}{c^2(x,\gam^\star)}C(x)\right)^{\otimes2}\pi_0(dx),
\end{align*}
in the case that the drift coefficient is correctly specified.
\end{Rem}

\begin{Rem}\label{Mcon}
Let $Y$ be a random variable which obeys $N(0,\Gam^{-1}\Sig(\Gam^{-1})^\top)$.
As a consequence of Theorem \ref{TPE} and Theorem \ref{AN}, we have 
\begin{equation}\label{ME}
E\left[f\left(\sqrt{T_n}(\tes-\theta^\star)\right)\right]\to E[f(Y)],
\end{equation}
for any polynomial growth function $f$.
It can be shown in the following way:
For any $q>1$, it follows from \cite[Lemma 2.2.8]{Dur10} and Theorem \ref{TPE} that
\begin{align*}
E\left[\left|\sqrt{T_n}(\tes-\theta^\star)\right|^q\right]&=\int_0^\infty qx^{q-1} P\left(\left|\sqrt{T_n}(\tes-\theta^\star)\right|>x\right)dx\\
&\lesssim \int_0^1 x^{q-1}dx+\int_1^\infty x^{-q}dx<\infty.
\end{align*} 
Hence $\left|\sqrt{T_n}(\tes-\theta^\star)\right|^q$ is asymptotically uniformly integrable from Markov's inequality, and \cite[Theorem 2.20]{Van00} implies \eqref{ME}.
\end{Rem}

\begin{Rem}\label{conj}
In this remark, we suppose that the data-generating model defined on the probability space $(\Omega, \mcf, (\mcf_t)_{t\in\mbbr_+},P)$ is supposed to be
\begin{equation}\label{twMolde}
dY_t=A(Y_t)dt+B(Y_t)dW_t+C(Y_{t-})dZ_t,
\end{equation}
where $W$ is a standard Wiener process independent of $(Y_{0},Z)$, $\mcf_t:=\sig(Y_0)\vee\sig((W_s,Z_s);s\leq t)$ and $B:\mbbr\to\mbbr$ is a measurable function.
We look at the following parametric model:
\begin{equation*}
dY_t = a(Y_t,\al)dt+b(Y_t,\gam)dW_t+c(Y_{t-},\gam)dZ_t,
\end{equation*}
where $b:\mbbr\times\Theta_\gam\to\mbbr$ is a measurable function. 
Here other ingredients are similarly defined as above and we use the same notations for its transition probability, invariant measure, and so on.
When the true coefficients $(A,B,C)$ are correctly specified, the GQMLE still has asymptotic normality and the sufficient conditions for it are easy to check (cf. \cite{Mas13-2}).
However, we note that it is difficult to give such conditions when they are misspecified.
This is because our methodology using the martingale representation theorem becomes insufficient due to the presence of Wiener component in the deviation of the asymptotic variance (see, the proof of Theorem \ref{AN}).
To formally derive a similar result to Theorem \ref{AN}, we may additionally have to impose the following condition:
\medskip

\textbf{Condition A}: There exists a unique $C^2$-solution $f$ on $\mbbr$ of 
\begin{equation}\label{PE}
\mca f(x)=A(x)\p_x f(x)+\frac{1}{2}B(x)\p_x^2 f(x)+\int_\mbbr (f(x+C(x)z)-f(x)-\p_x f(x)C(x)z)\nu_0(dz)=g(A(x),B(x),C(x)),
\end{equation}
where $g(A(x),B(x),C(x))$ is a specific function satisfying 
\begin{equation*}
\int_\mbbr g(A(x),B(x),C(x))\pi_0(dx)=0.
\end{equation*}
Furthermore, the first and second derivatives of $f$ are of at most polynomial growth.

\medskip
Under \textbf{Condition A}, the limit distribution of the GQMLE can be derived by combining the proof of \cite{UchYos12} and Theorem \ref{AN}.
It is known that the theory of viscosity solutions for integro-differential equations ensures the existence of $f$ in limited situation, for instance, see \cite{BarBucPar97}, \cite{BarImb08}, \cite{Ham16} and \cite{HamMor16}.
However, it is not so for the regularity of $f$. 
As another attempt to confirm \textbf{Condition A}, the associated EPE $\tilde{\mca}\tilde{f}\overset{EPE}=g$ may possibly be helpful.
This is because the existence and uniqueness of the solution $\tilde{f}$ of the EPE can be verified in an analogous way to Theorem \ref{EPE}, and if $\tilde{f}$ admits $C^2$-property and growth conditions in \textbf{Condition A}, then $\tilde{f}$ satisfies \eqref{PE}.
The latter argument can formally be shown as follows:
\medskip

It is enough to check $\mca\tilde{f}=g$.
Since $\tilde{f}(Y_t)-\int_0^t g(A(Y_s),B(Y_s),C(Y_s))ds$ is a martingale with respect to $(\mcf_t,P_x)$ for all $x\in\mbbr$, we have 
\begin{equation*}
E^x\left[\tilde{f}(Y_t)-\int_0^t g(A(Y_s),B(Y_s),C(Y_s))ds\right]=\tilde{f}(x).
\end{equation*}
Hence it follows from It\^{o}'s formula that as $t\to0$,
\begin{align*}
\left|\frac{E^x[\tilde{f}(Y_t)]-\tilde{f}(x)}{t}-g(A(x),B(x),C(x))\right|&=\left|\frac{1}{t}\int_0^t \left(E^x[g(A(Y_s),B(Y_s),C(Y_s))]-g(A(x),B(x),C(x))\right)ds\right|\\
&=\left|\frac{1}{t}\int_0^t \int_0^sE^x[\mca g(A(Y_u),B(Y_u),C(Y_u))]duds\right|\lesssim t \to 0.
\end{align*}

\medskip
In this sketch, we implicitly assume suitable regularity and moment conditions on each ingredient, but they are reduced to be conditions on the true coefficients $(A,B,C)$.
Thus, verifying the behavior of 
\begin{equation*}
\tilde{f}(x)=\int_0^\infty E^x[g(A(Y_t),B(Y_t),C(Y_t))] dt=\int_0^\infty \int_\mbbr g(A(y),B(y),C(y))P_t(x,dy)
\end{equation*}
leads to \textbf{Condition A}.
Just for L\'{e}vy driven Ornstein-Uhlenbeck models, we can observe the property of $\tilde{f}(x)=\int_0^\infty E^x[g(A(Y_t),B(Y_t),C(Y_t))] dt$ based on the explicit form of the solution (cf. Example \ref{ex1}).
Although, for general L\'{e}vy driven SDEs, the gradient estimates of their transition probability making use of Malliavin calculus have been investigated lately (cf. \cite{Wan14}, \cite{WanXuZha15}, and the references therein), the property of $\tilde{f}(x)=\int_0^\infty E^x[g(A(Y_t),B(Y_t),C(Y_t))] dt$ is still difficult to be checked as far as the author knows.
Since these are out of range of this paper, we will not treat them later.
\end{Rem}

\begin{Example}\label{ex1}
Here we consider the following Ornstein-Uhlenbeck model:
\begin{equation*}
dX_t=-\al X_tdt+dZ_t,
\end{equation*}
for a L\'{e}vy process $Z$ not necessarily being pure-jump type and a positive constant $\al$.
Applying It\^{o}'s formula to $\exp(\al t)X_t$, we have 
\begin{equation*}
X_t=X_0\exp(-\al t)+\int_0^t \exp(\al(s-t))dZ_s
\end{equation*}
and 
\begin{equation*} 
E^x[f(X_t)]=\int_\mbbr f(x\exp(-\al t)+y)p_t(dy)
\end{equation*}
for a suitable function $f$. Here $p_t$ is the probability distribution function of $\int_0^t \exp(\al(s-t))dZ_s$ whose characteristic function $\hat{p}_t(\cdot)$ is given by: 
\begin{equation}\label{cf}
\hat{p}_t(u)=\exp\left\{\int_0^t \psi(\exp(\al(s-t))u)ds\right\},
\end{equation}
for $\psi(u):=\log E[\exp(iuZ_1)]$ (cf. \cite[Theorem 3.1]{SatYam84}).
In this case, $X$ fulfills Assumption \ref{Stability} provided that Assumption \ref{Moments}-(1) holds, and that the L\'{e}vy measure $\nu_0$ of $Z$ has a continuously differentiable positive density $g$ on an open neighborhood around the origin (for more details, see \cite[Section 5]{Mas13-2}).
Then, the characteristic function $\hat{p}(\cdot)$ of the invariant measure $\pi_0$ is given by
\begin{equation*}
\hat{p}(u)=\exp\left\{\int_0^\infty \psi(\exp(-\al s)u)ds\right\}.
\end{equation*}
Under such condition, if $f$ is differentiable and itself and its derivative are of at most polynomial growth, we have 
\begin{align*}
\left|\p_x \left(\int_0^\infty E^x[f(X_t)]dt\right) \right|&=\left|\int_0^\infty \left(\int_\mbbr \p_x f(x\exp(-\al t)+y)p_t(dy)\right)\exp(-\al t)dt\right|\\
&\lesssim \int_0^\infty \left\{1+|x|^K+(1+|x|^{2K})\exp(-at)\right\}\exp(-\al t)dt\lesssim 1+|x|^{2K},
\end{align*}
for a positive constant K.
We can derive similar estimates with respect to its higher-order derivatives in the same way.
\end{Example}

Let $J$ be a L\'{e}vy process such that its moments of any-order exists and its triplet is $(0,b,\nu^J)$ (cf. \cite{App09}).
Here $b$ is allowed to be $0$.
Mimicking the previous example, we write $p^J_t$ as the probability distribution function of $ \int_0^t \exp(\al(s-t)) dJ_s$ for a positive constant $\al>0$ and $\psi^J(u)$ stands for $\log E[\exp(iuJ_1)]$ below.
Combining the argument in Remark \ref{conj} and Example \ref{ex1}, we obtain the following corollary:

\begin{Cor}
For a natural number $k\geq 2$, let $f$ be a polynomial growth $C^k$-function whose derivatives are of at most polynomial growth.
Suppose that the integral of $f$ with respect to the Borel probability measure $\pi_0$ whose characteristic function is $\exp\left\{\int_0^\infty \psi^J(\exp(-\al s)u)ds\right\}$ is $0$, and that $\nu^J$ has a continuously differentiable positive density on an open neighborhood around the origin.
Then, the function 
\begin{equation*}
g(x):=\int_0^\infty E^x\left[f\left(x\exp(-\al t)+\int_0^t \exp(\al(s-t))dJ_s\right)\right]dt=\int_0^\infty\int_\mbbr f(x\exp(-\al t)+y)p^J_t(dy)dt
\end{equation*}
on $\mbbr$ is the unique solution of the following (first or second order) integro-differential equation
\begin{equation}\label{ide}
-\al x\p_x g(x)+\frac{1}{2}b\p_x^2 g(x)+ \int_{\mbbr } \left(g(x+z)-g(x)-\p g(x)z\right)\nu^J(dz)=f(x),
\end{equation}
and moreover, $g$ is also a polynomial growth $C^k$-function.
\end{Cor}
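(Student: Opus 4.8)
The plan is the one already announced: combine the explicit estimates of Example~\ref{ex1} with the reasoning of Remark~\ref{conj}. Throughout, write $X_t:=xe^{-\al t}+\int_0^t e^{\al(s-t)}\,dJ_s$ for the Ornstein--Uhlenbeck process started at $x$ and let $(P_t)$ be its transition semigroup, so that $E^x[\phi(X_t)]=\int_\mbbr\phi(xe^{-\al t}+y)\,p^J_t(dy)$ and $g(x)=\int_0^\infty E^x[f(X_t)]\,dt=\int_0^\infty P_tf(x)\,dt$. Since $\nu^J$ has a continuously differentiable positive density near the origin, Example~\ref{ex1} (via \cite[Section~5]{Mas13-2}) supplies the exponential ergodicity and the uniform-in-$t$ polynomial moment bounds of Assumption~\ref{Stability} for this model, with $\pi_0$ the stated stationary law; moreover all moments of $J$, hence of $p^J_t$, are finite. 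Three points then have to be checked: (i) $g$ is a well-defined $C^k$ function of polynomial growth whose derivatives grow at most polynomially; (ii) $g$ solves \eqref{ide}; (iii) the solution is unique.

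For (i) I would repeat the computation of Example~\ref{ex1}, now carried to order $k$ and to possibly non-zero $b$ (which requires no change). Because $\pi_0(f)=0$, one has $|P_tf(x)|=|E^x[f(X_t)]-\pi_0(f)|\lesssim(1+|x|^{K})e^{-at}$, where $K$ is the polynomial order of $f$, so $g=\int_0^\infty P_tf\,dt$ converges absolutely and is of polynomial growth. For $1\le m\le k$ one differentiates under both integral signs --- legitimate by the polynomial growth of $f^{(m)}$, the finiteness of all moments of $p^J_t$, and the factor $e^{-m\al t}$ coming from the chain rule --- to obtain
\begin{equation*}
g^{(m)}(x)=\int_0^\infty e^{-m\al t}\,E^x\!\left[f^{(m)}(X_t)\right]dt ,
\end{equation*}
where the factor $e^{-m\al t}$ together with $\sup_t E^x[|f^{(m)}(X_t)|]\lesssim 1+|x|^{K_m}$ makes the integral absolutely convergent and polynomially bounded, and dominated convergence gives continuity of $g^{(m)}$. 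Hence $g\in C^k(\mbbr)$ with $g,g',\dots,g^{(k)}$ of at most polynomial growth; in particular, since $k\ge2$, $g''$ is polynomially bounded, so the integro-differential operator $\mcl$ on the left-hand side of \eqref{ide} is well defined at $g$: near $0$, $|g(x+z)-g(x)-g'(x)z|\lesssim|z|^2(1+|x|+|z|)^{K'}$ is $\nu^J$-integrable because $\int_{|z|\le1}|z|^2\nu^J(dz)<\infty$, while for $|z|>1$ the integrand is dominated by $(1+|x|+|z|)^{K'}$, integrable because $J$ has all moments.

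For (ii) the key point is the resolvent identity: by the Markov property and Fubini (licit by the bound above),
\begin{equation*}
P_ug(x)=\int_0^\infty P_{u+t}f(x)\,dt=\int_u^\infty P_tf(x)\,dt=g(x)-\int_0^u P_tf(x)\,dt ,
\end{equation*}
so $u^{-1}(P_ug(x)-g(x))\to-f(x)$ as $u\downarrow0$, using the right-continuity of $t\mapsto E^x[f(X_t)]$ at $0$ (the \cadlag\ property of $X$ and dominated convergence with the moment bounds). Thus $g$ lies in the domain of the extended generator $\tilde{\mca}$ of $X$ --- indeed it solves the associated extended Poisson equation --- and, by the $C^2$-regularity and polynomial growth obtained in (i) together with the finiteness of all moments of $X$, the accompanying local martingale is a genuine $L_2$-martingale; applying It\^o's formula to $g(X_t)$, exactly the computation displayed at the end of Remark~\ref{conj}, then identifies $\tilde{\mca}g$ with $\mcl g$ and produces \eqref{ide} pointwise on $\mbbr$. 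The $\frac12 b\,\p_x^2 g$ term is present only when $b\neq0$, which is why the statement distinguishes the first- and second-order cases. For (iii): if $\tilde g$ is another polynomially bounded $C^2$-solution, then $h:=g-\tilde g$ obeys $\mcl h=0$, so the same It\^o argument shows that $h(X_t)$ is an $L_2$-martingale under each $P_x$; hence $h(x)=E^x[h(X_t)]$ for all $t$, and letting $t\to\infty$ and invoking the exponential ergodicity and the polynomial growth of $h$ yields $h\equiv\pi_0(h)$, a constant. Since $\pi_0(g)=\int_0^\infty\pi_0(P_tf)\,dt=\int_0^\infty\pi_0(f)\,dt=0$ by stationarity, $g$ is the unique polynomially bounded solution with vanishing $\pi_0$-mean (equivalently, uniqueness holds up to an additive constant, exactly as for Proposition~\ref{EPE}); together with (i) this is the assertion.

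The main obstacle is not a single deep estimate --- everything is explicit for the Ornstein--Uhlenbeck law --- but making the two routine mechanisms genuinely applicable. First, differentiation under the integral sign must be justified up to order $k$, locally uniformly in $x$, which forces one to track polynomial degrees against the finite moments of $p^J_t$ and the exponential gain $e^{-m\al t}$. Second, and more substantially, the \emph{a priori} merely local martingale produced by the extended Poisson equation must be shown to be a true $L_2$-martingale, so that It\^o's formula can be used to convert the probabilistic identity $P_ug=g-\int_0^u P_tf\,dt$ into the analytic equation \eqref{ide}; this is exactly where the polynomial growth of $g,g',g''$ from (i) and the moment and ergodicity input imported from Example~\ref{ex1} enter, and once those are in place the argument closes.
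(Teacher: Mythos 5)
Your proposal is correct and follows essentially the same route as the paper, which proves this corollary only by ``combining Remark \ref{conj} and Example \ref{ex1}'': you carry out the Example-\ref{ex1}-type differentiation under the integral sign to get the $C^k$ and polynomial-growth properties of $g$, and then use the resolvent identity, the extended-generator (EPE) viewpoint and It\^o's formula exactly as in the sketch of Remark \ref{conj} to obtain \eqref{ide}. Your explicit uniqueness step (uniqueness up to an additive constant, normalized by $\pi_0(g)=0$) is a sound refinement of the paper's unproved uniqueness claim rather than a departure from its method.
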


\begin{Rem}
If the L\'{e}vy measure $\nu^J$ is symmetric (i.e. the imaginary part of $\psi^J$ is 0), the equation \eqref{ide} is solvable for many odd functions $f$ as a matter of course.
More specifically, for $k\in\mbbn$ and $f(x)=x^{2k+1}$, the solution $g$ is 
\begin{align*}
g(x)&=\int_0^\infty\int_\mbbr (x\exp(-\al t)+y)^{2k+1} p^J_t(dy)dt\\
&=\int_0^\infty \int_\mbbr \sum_{i=0}^{2k+1} \frac{(2k+1)!}{i! (2k+1-i)!} (x\exp(-\al t))^i y^{2k+1-i} p^J_t(dy)dt.
\end{align*}
By observing the derivatives of the characteristic function, $\int_\mbbr y^{2k+1-i} p^J_t(dy)$ can be expressed by the moments of $J$, hence the explicit expression of $g$ is available.
\end{Rem}
  
\begin{Rem}
Beside the estimation of $\theta$, what is of special interest is the inference for $\nu_0$ which may often be an infinite dimensional parameter.
Even for $(A,C)$ being constant and specified (i.e. $X$ is a L\'{e}vy process with drift), it may be interest in its own right and enormous papers have addressed this problem so far.
We refer to \cite{Mas15} for comprehensive accounts under $Z$ being assumed to have a certain parametric structure.
As for the situation where just a few information on $Z$ is available, one of plausible attempts is the method of moments proposed in \cite{Fig08}, \cite{Fig09}, and \cite{NicReiSohTra16}, for example.
Especially \cite{NicReiSohTra16} established a Donsker-type functional limit theorem for empirical processes arising from high-frequently observed L\'{e}vy processes.
When the coefficients $A$ and $C$ are nonlinear functions but specified, the residual based method of moments for $\nu_0$ by \cite{MasUeh17-1} is effective: using the GQMLE $\tes:=(\ges,\aes)$, we have 
\begin{align*}
&\frac{1}{T_n}\sumj \vp\left(\frac{\D_j X-h_na_{j-1}(\aes)}{c_{j-1}(\ges)}\right)\overset{P}\to \int_\mbbr \vp(z)\nu_0(dz),\\
&\hat{D}_n\sqrt{T_n}\begin{pmatrix}\tes-\theta_0\\ \frac{1}{T_n}\sumj \vp\left(\frac{\D_j X-h_na_{j-1}(\aes)}{c_{j-1}(\ges)}\right)- \int \vp(z)\nu_0(dz)\end{pmatrix} \overset{\mcl}\rightarrow N(0,I_{p+q}),
\end{align*}
for an appropriate $\mbbr^q$-valued function $\vp$ and a $(p+q)\times(p+q)$ matrix $\hat{D}_n$ which can be constructed only by the observations. 
For instance, we can choose $\vp(z)=z^r$ and $\vp(z)=\exp(iuz)-1-iuz$ (to estimate the $r$-th cumulant of $Z$ and the cumulant function of $Z$, respectively) as $\vp$; see \cite[Assumption 2.7]{MasUeh17-1} for the precise conditions on $\vp$.
As for misspecified case, if the misspecification is confined within the drift coefficient, then this scheme is still valid thanks to the faster diminishment of the mean activity in small time (cf. Remark \ref{decay}).
\end{Rem}

\section{Numerical experiments}\label{NE}
\begin{figure}[t]
\caption{The plot of the density functions of (i) $NIG(10,0,10,0)$ (black dotted line), (ii) $bGamma(1,\sqrt{2},1,\sqrt{2})$ (green line), (iii) $NIG(25/3,20/3,9/5,-12/5)$ (blue line), and $N(0,1)$ (red line).}
\begin{center}
   \includegraphics[width=80mm]{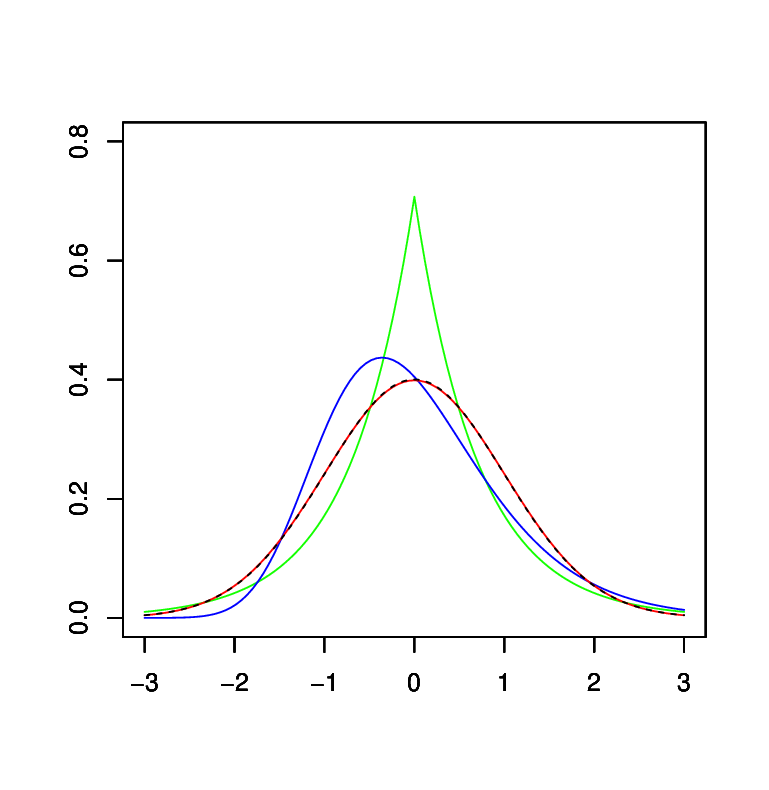}
  \end{center}
\label{compdens}
\end{figure}

\begin{figure}[t]
\caption{The boxplot of case (i); the target optimal values are described by dotted lines.}
\begin{center}
   \includegraphics[width=150mm]{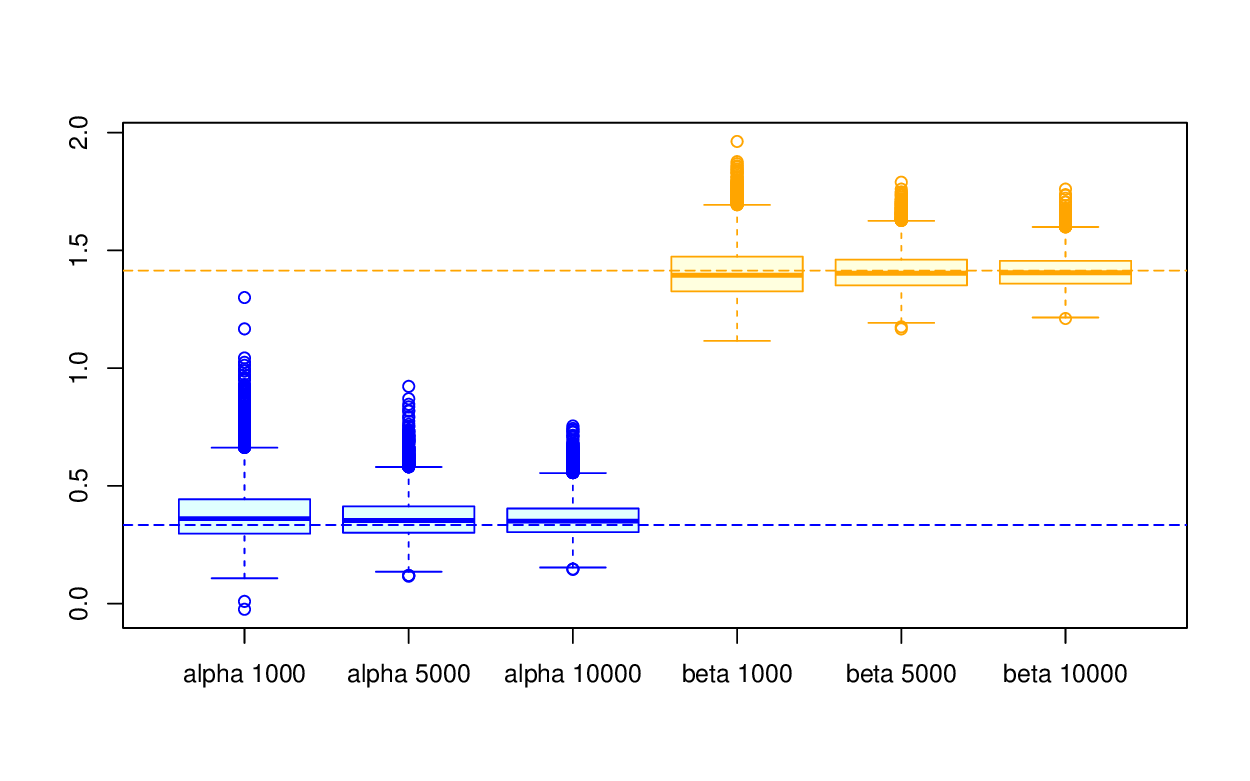}
  \end{center}
\end{figure}

\begin{figure}[t]
\caption{The boxplot of case (ii); the target optimal values are described by dotted lines.}
\begin{center}
   \includegraphics[width=150mm]{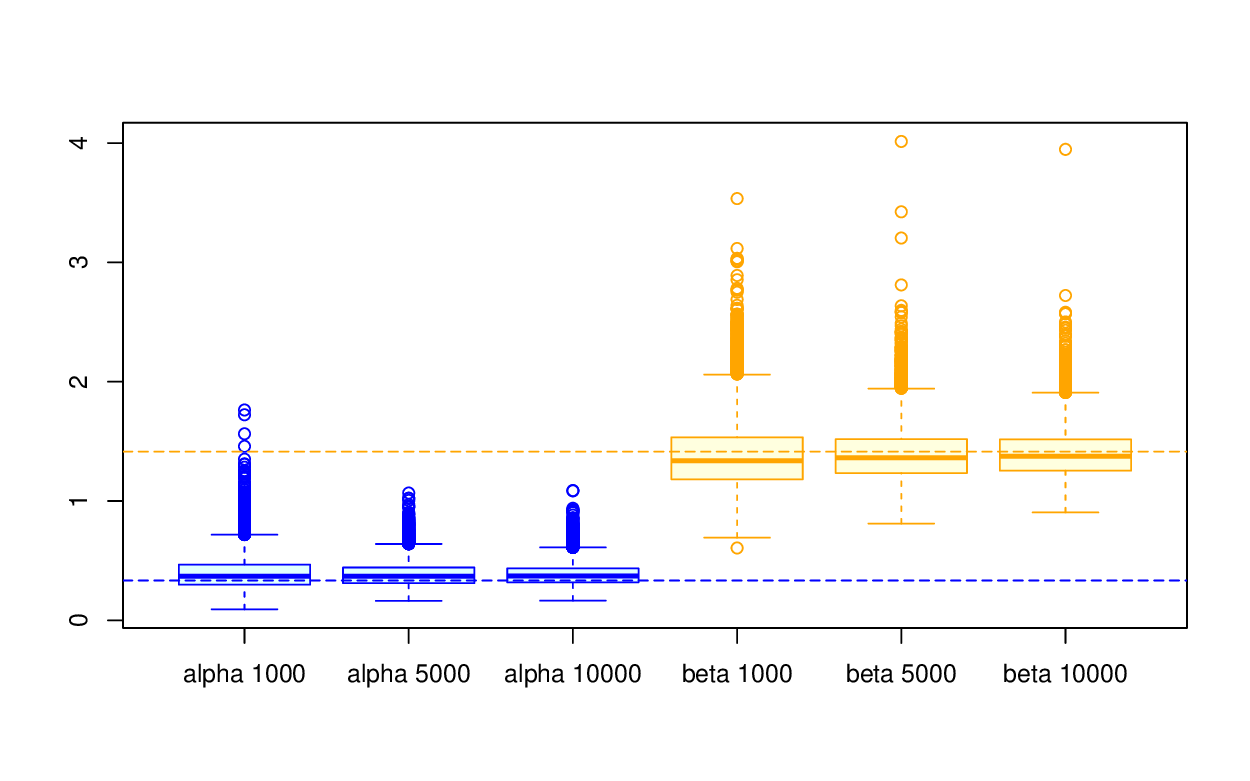}
  \end{center}
\end{figure}

\begin{figure}[t]
\caption{The boxplot of case (iii); the target optimal values are described by dotted lines.}
\begin{center}
   \includegraphics[width=150mm]{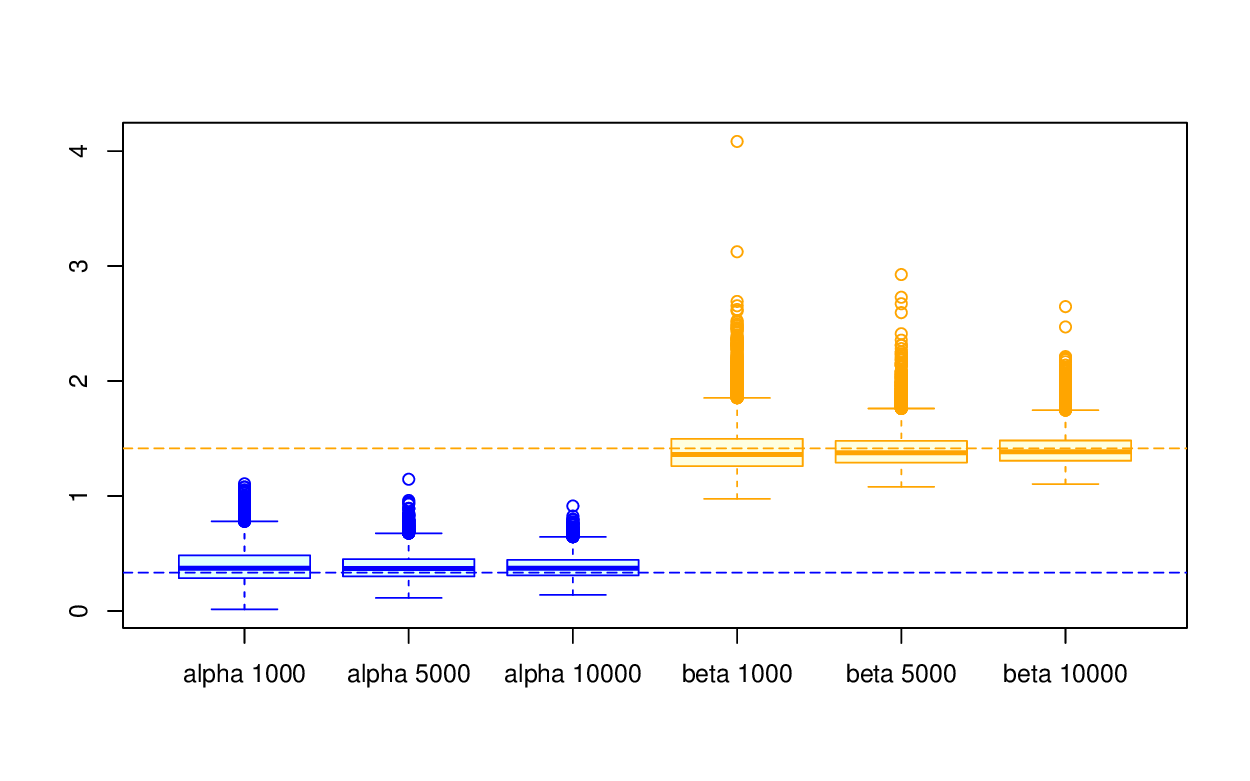}
  \end{center}
\end{figure}

\begin{table}[t]
\begin{center}
\caption{The performance of our estimators; the mean is given with the standard deviation in parenthesis. The target optimal values are given in the first line of each items.}
\scalebox{0.85}{\begin{tabular}{ccccccccccc}
\hline
&&&&&&&&&&\\[-3.5mm]
$T_n$ & $n$ & $h_n$ & \multicolumn{2}{c}{(i) (0.33,1.41)} &  \multicolumn{2}{c}{(ii) (0.37, 1.41)} & \multicolumn{2}{c}{(iii) (0.37, 1.41)} &\multicolumn{2}{c}{diffusion (0.33, 1.41)} \\ 
&&& $\aes$ & $\ges$ & $\aes$ & $\ges$ & $\aes$ & $\ges$ & $\aes$ & $\ges$ \\ \hline
50& 1000 & 0.05&0.38&1.41&0.40&1.39&0.40&1.39&0.38&1.41 \\
&&&(0.12)&(0.11)&(0.16)&(0.29)&(0.15)&(0.19)&(0.13)&(0.10)\\ 
100& 5000 & 0.02&0.37&1.41&0.39&1.39&0.38&1.39&0.36&1.41\\
&&&(0.09)&(0.08)&(0.11)&(0.23)&(0.11)&(0.15)&(0.09)&(0.08)\\ 
100&10000&0.01&0.36&1.41&0.37&1.39&0.38&1.40&0.36&1.41\\  
&&&(0.08)&(0.07)&(0.09)&(0.22)&(0.10)&(0.15)&(0.08)&(0.07)\\ \hline
\end{tabular}
\label{Result}}
\end{center}
\end{table}
We suppose that the data-generating model is the following L\'{e}vy driven Ornstein-Uhlenbeck model:
\begin{equation*}
dX_t=-\frac{1}{2}X_tdt+dZ_t,\quad X_0=0,
\end{equation*}
and that the parametric model is described as:
\begin{equation*}
dX_t=\alpha (1-X_t)dt+\frac{\gam}{\sqrt{1+X_t^2}}dZ_t, \quad \al,\gam>0.
\end{equation*}
The functional form of the coefficients is the same in \cite[Example 3.1]{UchYos11}.
We conduct numerical experiments in three situations: (i) $\mcl(Z_t)=NIG(10,0,10t,0)$, (ii) $\mcl(Z_t)=bGamma(t,\sqrt{2},t,\sqrt{2})$, and (iii) $\mcl(Z_t)=NIG(25/3,20/3,9/5t,-12/5t)$.
$NIG$ (normal inverse Gaussian) random variable is defined by the normal mean-variance mixture of inverse Gaussian random variable, and $bGamma$ (bilateral Gamma) random variable is defined by the difference of two independent Gamma random variables. 
For their technical accounts, we refer to \cite{Bar98} and \cite{KucTap08-1}.
To visually observe their non-Gaussianity, each density function at $t=1$ is plotted with the density of $N(0,1)$ in Figure \ref{compdens} altogether.
By taking the limit of \eqref{cf}, the characteristic function $\hat{p}(\cdot)$ of the invariant measure $\pi_0$ is given by
\begin{equation}
\hat{p}(u)=\exp\left\{\int_0^\infty \psi\left(\exp\left(-\frac{s}{2}\right)u\right)ds\right\},
\end{equation}
where $\psi(u):=\log E[\exp(iuZ_1)]$.
Differentiating $\hat{p}$, we have $\tilde{\kappa}_j=2\kappa_j/j$ for the $j$-th cumulant $\tilde{\kappa}_j$ (resp. $\kappa_j$) of $Y\sim \pi_0$ (resp. $Z_1$).
Hence we obtain
\begin{align*}
&\mbbg_{1}(\gam)=-2\log\gam-\frac{2}{\gam^2}+\int_\mbbr \log(1+x^2)\pi_0(dx),\\
&\mbbg_{2}(\al)=-\frac{1}{\gam^\star}\left\{\frac{1}{4}\int_\mbbr x^3\pi_0(dx)+\al\left(1-\int_\mbbr x^3\pi_0(dx)+\int_\mbbr x^4\pi_0(dx)\right)+\al^2\left(3-2\int_\mbbr x^3\pi_0(dx)+\int_\mbbr x^4\pi_0(dx)\right)\right\}.
\end{align*} 
By solving the estimating equations, the target optimal values are given by
\begin{align*}
\gam^\star=\sqrt{2}, \ \al^\star=\frac{1-\int_\mbbr x^3\pi_0(dx)+\int_\mbbr x^4\pi_0(dx)}{2(3-2\int_\mbbr x^3\pi_0(dx)+\int_\mbbr x^4\pi_0(dx))}.
\end{align*}
In the calculation, we used $\int_\mbbr x\pi_0(dx)=0$ and $\int_\mbbr x^2\pi_0(dx)=1$.
Thus, in each case, the optimal parameter $\theta^\star:=(\al^\star,\gam^\star)$ is given as follows: (i) $\theta^\star=\left( 803/2406,\sqrt{2}\right)\approx(0.3337, 1.4142)$, (ii) $\theta^\star=\left(11/30,\sqrt{2}\right)\approx(0.3667, 1.4142)$, and (iii) $\theta^\star=\left( 609/1658, \sqrt{2}\right)\approx(0.3673, 1.4142)$ (we write approximated values  obtained by rounding off $\theta^\star$ to four decimal places).
Solving the corresponding estimating equations, our staged GQMLE are calculated  as:
\begin{align*}
\hat{\al}_n=-\frac{\sum_{j=1}^n (X_{j-1}-1)(X_j-X_{j-1})(X_{j-1}^2+1)}{h_n\sum_{j=1}^n(X_j-1)^2(X_{j-1}^2+1)}, \ \ges=\sqrt{\frac{1}{nh_n}\sum_{j=1}^n(X_j-X_{j-1})^2(X_{j-1}^2+1)}.
\end{align*}
We generated 10000 paths of each SDE based on Euler-Maruyama scheme and constructed the estimators along with the above expressions, independently.
In generating the small time increments of the driving noises, we used the function \texttt{rng} equipped to YUIMA package in R \cite{BroFuk14}.
Together with the diffusion case $\left(\theta^\star=\left(1/3,\sqrt{2}\right)\approx(0.3333, 1.4142)\right)$, the mean and standard deviation of each estimator is shown in Table \ref{Result} where $n$ and $h_n=5n^{-2/3}$ denote the sample size and observation interval, respectively.
We also present their boxplots to enhance the visibility.
We can observe the followings from the table and boxplots:

\begin{itemize}
\item Overall, the estimation accuracy of $\tes$ improves as $T_n$ and $n$ increase and $h_n$ decrease, and this tendency reflects our main result.
\item The result of case (i) is almost the same as the diffusion case. This is thought to be based on the well-known fact that $NIG(\del,0,\del t,0)$ tends to $N(0,t)$ in total variation norm as $\del\to\infty$ for any $t>0$.
Indeed, Figure \ref{compdens} shows that the density functions of $NIG(10,0,10,0)$ and $N(0,1)$ are virtually the same.
\item Concerning case (ii), the standard deviation of $\ges$ is relatively worse than the other cases. This is natural because the asymptotic variance of $\ges$ includes the forth-order-moment of $Z$, and $bGamma(1,\sqrt{2},1,\sqrt{2})$ has the highest kurtosis value as can be seen from Figure \ref{compdens}. 
\item In case (iii), the performance of $\aes$ is the worst in this experiment. This may cause from the fact that only $NIG(25/3,20/3,9/5,-12/5)$ is not symmetric.
\end{itemize}

\section{Appendix}\label{AP}
%
%
Throughout the proofs, for functions $f$ on $\mbbr\times\Theta$, we will sometimes write $f_s$ and $f_{j-1}$ instead of $f_s(\theta^\star)$ and $f_{j-1}(\theta^\star)$ just for simplicity.
 \medskip

\noindent\textbf{Proof of Theorem \ref{TPE}}
\indent In light of our situation, it is sufficient to check the conditions [A1''], [A4'] and [A6] in \cite{Yos11} for $\mbbg_{1,n}$ and $\mbbg_{2,n}$, respectively. 
For the sake of convenience, we simply write $\mbby_{1,n}(\gam):=\mbbg_{1,n}(\gam)-\mbbg_{1,n}(\gam^\star)$ and $\mbby_{2,n}(\al):=\mbbg_{2,n}(\al)-\mbbg_{2,n}(\al^\star)$ below.
Without loss of generality, we can assume $p_\gam=p_\al=1$.
First we treat $\mbbg_{1,n}(\cdot)$.
The conditions hold if we show
\begin{align}
&\sup_{n\in\mbbn} E\left[|\sqrt{T_n}\p_\gam \mbbg_{1,n}(\gam^\star)|^K\right]<\infty, \label{mb1}\\
&\sup_{n\in\mbbn} E\left[|\sqrt{T_n}(\p_\gam^2\mbbg_{1,n}(\gam^\star)-\Gam_\gam)|^K\right]<\infty, \label{mb2}\\
&\sup_{n\in\mbbn} E\left[\sup_{\gam\in\Theta_\gam}|\p_\gam^3 \mbbg_{1,n}(\gam)|^K\right]<\infty,\label{mb3}\\
&\sup_{n\in\mbbn} E\left[\sup_{\gam\in\Theta_\gam}|\sqrt{T_n}\left(\mbby_{1,n}(\gam)-\mbby_1(\gam)\right)|^K\right]<\infty\label{mb4},
\end{align}
for any $K>0$.
The first two derivatives of $\mbbg_{1,n}$ are given by
\begin{align*}
&\p_\gam \mbbg_{1,n}(\gam)=-\frac{2}{T_n}\sum_{j=1}^n\left\{\frac{\p_\gam c_{j-1}(\gam)}{c_{j-1}(\gam)}h_n-\frac{\p_\gam c_{j-1}(\gam)}{c^3_{j-1}(\gam)}(\D_j X)^2\right\},\\
&\p_\gam^2 \mbbg_{1,n}(\gam)=-\frac{2}{T_n}\sumj\left\{\frac{\p_\gam^{2}c_{j-1}(\gam)c_{j-1}(\gam)-(\p_\gam c_{j-1})^{2}}{c^2_{j-1}(\gam)}h_n-\frac{\p_\gam^{2}c_{j-1}(\gam)c_{j-1}(\gam)-3(\p_\gam c_{j-1}(\gam))^{2}}{c^4_{j-1}(\gam)}(\D_j X)^2\right\}.
\end{align*}
We further decompose $\p_\gam \mbbg_{1,n}(\gam^\star)$ as
\begin{align*}
\p_\gam \mbbg_{1,n}(\gam^\star)=-\frac{2}{n}\sum_{j=1}^n\frac{\p_\gam c_{j-1}}{c^3_{j-1}}\left(c^2_{j-1}-C^2_{j-1}\right)+\frac{2}{T_n}\sum_{j=1}^n\frac{\p_\gam c_{j-1}}{c^3_{j-1}}\left\{(\D_j X)^2-h_nC^2_{j-1}\right\}.
\end{align*}
Since the optimal parameter $\theta^\star$ is in $\Theta$, the interchange of the derivative and the integral implies that the function $\p_\gam c(x,\gam^\star)(c^2(x,\gam^\star)-C^2(x))/c^3(x,\gam^\star)$ is centered in the sense that its integral with respect to $\pi_0$ is 0.
Thus \cite[Lemma 4.3]{Mas13-1} and \cite[Lemma 5.3]{MasUeh17-1} lead to \eqref{mb1} and \eqref{mb4}.
We also have
\begin{align*}
\p_\gam^2 \mbbg_{1,n}(\gam)&=-\frac{2}{n}\sumj\left\{\frac{\p_\gam^{2}c_{j-1}(\gam)c_{j-1}(\gam)-(\p_\gam c_{j-1})^{2}}{c^2_{j-1}(\gam)}-\frac{\p_\gam^{2}c_{j-1}(\gam)c_{j-1}(\gam)-3(\p_\gam c_{j-1}(\gam))^{2}}{c^4_{j-1}(\gam)}C^2_{j-1}\right\}\\
&+\frac{2}{T_n}\sumj\frac{\p_\gam^{2}c_{j-1}(\gam)c_{j-1}(\gam)-3(\p_\gam c_{j-1}(\gam))^{2}}{c^4_{j-1}(\gam)}\left\{(\D_j X)^2-h_nC^2_{j-1}\right\}.
\end{align*}
Again applying \cite[Lemma 4.3]{Mas13-1} and \cite[Lemma 5.3]{MasUeh17-1}, we obtain \eqref{mb2}.
Via simple calculation, the third and fourth-order derivatives of $\mbbg_{1,n}$ can be represented as
\begin{equation*}
\p_\gam^i\mbbg_{1,n}(\gam)=\frac{1}{n}\sumj g_{j-1}^i(\gam)+\frac{1}{T_n}\sumj \tilde{g}_{j-1}^i(\gam) \left\{(\D_j X)^2-h_n C^2_{j-1}\right\}, \quad \mbox{for} \ i\in\{3,4\},
\end{equation*} 
with the matrix-valued functions $g^i(\cdot,\cdot)$ and $\tilde{g}^i(\cdot,\cdot)$ defined on $\mbbr\times\Theta_\gam$, and these are of at polynomial growth with respect to $x\in\mbbr$ uniformly in $\gam$.
Hence \eqref{mb3} follows from Sobolev's inequality (cf. \cite[Theorem 1.4.2]{Ada73}).
Thus \cite[Theorem 3-(c)]{Yos11} leads to the tail probability estimates of $\ges$.
From Taylor's expansion, we get
\begin{align*}
&\mbby_{2,n}(\al)\\
&=\frac{1}{T_n}\sumj \frac{2\D_j X(a_{j-1}(\al)-a_{j-1}(\al^\star))+h_n(a_{j-1}^2(\al^\star)-a_{j-1}^2(\al))}{c^2_{j-1}(\gam^\star)}\\
&+\left(\int_0^1\frac{1}{(T_n)^{3/2}}\sumj \left\{2\D_j X(a_{j-1}(\al)-a_{j-1}(\al^\star))+h_n(a_{j-1}^2(\al^\star)-a_{j-1}^2(\al))\right\}\p_\gam c^{-2}_{j-1}(\gam^\star+u(\ges-\gam^\star))du\right)\\
&\qquad(\sqrt{T_n}(\ges-\gam^\star))\\
&:= \tilde{\mbby}_{2,n}(\al)+\bar{\mbby}_{2,n}(\al)(\sqrt{T_n}(\ges-\gam^\star)).
\end{align*}
Sobolev's inequality leads to
\begin{align*}
&E\left[\left|\sqrt{T_n}\bar{\mbby}_{2,n}(\al)\right|^K\right]\\
&\leq E\left[\sup_{\gam\in\Theta_\gam}\left|\frac{1}{T_n}\sumj \left\{2\D_j X(a_{j-1}(\al)-a_{j-1}(\al^\star))+h_n(a_{j-1}^2(\al^\star)-a_{j-1}^2(\al))\right\}\p_\gam c^{-2}_{j-1}(\gam)\right|^K\right]\\
&\lesssim \sup_{\gam\in\Theta_\gam}\left\{E\left[\left|\frac{1}{T_n}\sumj \left\{2\D_j X(a_{j-1}(\al)-a_{j-1}(\al^\star))+h_n(a_{j-1}^2(\al^\star)-a_{j-1}^2(\al))\right\}\p_\gam c^{-2}_{j-1}(\gam)\right|^K\right]\right.\\
&\left.\quad\qquad+E\left[\left|\frac{1}{T_n}\sumj \left\{2\D_j X(a_{j-1}(\al)-a_{j-1}(\al^\star))+h_n(a_{j-1}^2(\al^\star)-a_{j-1}^2(\al))\right\}\p_\gam^2 c^{-2}_{j-1}(\gam)\right|^K\right]\right\},
\end{align*}
for $K>1$.
The last two terms of the right-hand-side are finite from \cite[Lemma 5.3]{MasUeh17-1}, and the moment bounds of the three functions $\sqrt{T_n}\p_\al^i\bar{\mbby}_{2,n}(\al)$ ($i\in\{1,2,3\}$) can analogously be obtained.
Thus combined with the tail probability estimates of $\ges$ and Schwartz's inequality, it suffices to show the conditions for 
\begin{align*}
&\tilde{\mbbg}_{2,n}(\al):=-\frac{1}{T_n}\sumj \frac{(\D_j X-h_na_{j-1}(\al))^2}{h_nc^2_{j-1}(\gam^\star)},\\
&\tilde{\mbby}_{2,n}(\al):= \frac{1}{T_n}\sumj \frac{2\D_j X(a_{j-1}(\al)-a_{j-1}(\al^\star))+h_n(a_{j-1}^2(\al^\star)-a_{j-1}^2(\al))}{c^2_{j-1}(\gam^\star)},
\end{align*}
instead of $\mbbg_{2,n}(\al)$ and $\mbby_{2,n}(\al)$, respectively.
Since their estimates can be proved in a similar way to the first half, we omit the details.  $\square$

\medskip

To derive Proposition \ref{EPE}, we prepare the next lemma.
For $L_1$ metric $d(\cdot,\cdot)$ on $\mbbr$, we define the coupling distance $W(\cdot,\cdot)$ between any two probability measures $P$ and $Q$ by
\begin{equation*}
W(P,Q):=\inf\left\{\int_{\mbbr^2} d(x,y)d\mu(x,y): \mu\in M(P,Q)\right\}=\inf\left\{\int_{\mbbr^2} |x-y|d\mu(x,y): \mu\in M(P,Q)\right\},
\end{equation*}
where $M(P,Q)$ denotes the set of all probability measures on $\mbbr^2$ with marginals $P$ and $Q$.
$W(\cdot,\cdot)$ is called the probabilistic Kantrovich-Rubinstein metric (or the first Wasserstein metric).
The following assertion gives the exponential estimates of $W(P_t(\cdot,\cdot),\pi_0)$:

\begin{Lem}
If Assumption \ref{Stability} holds, then for any $q>1$, there exists a positive constant $C_q$ such that for all $x\in\mbbr$,
\begin{equation*}
W(P_t(x,\cdot),\pi_0)\leq C_q\exp(-at)(1+|x|^q).
\end{equation*} 
\end{Lem}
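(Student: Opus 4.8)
The plan is to derive the Wasserstein bound from the weighted total-variation estimate \eqref{Ergodicity} supplied by Assumption \ref{Stability} by means of Kantorovich--Rubinstein duality. First I recall that for Borel probability measures $P,Q$ on $\mbbr$ with finite first absolute moments one has
\begin{equation*}
W(P,Q)=\sup\left\{\left|\int_\mbbr f\,d(P-Q)\right|\ :\ f:\mbbr\to\mbbr,\ \mbox{$f$ is $1$-Lipschitz}\right\},
\end{equation*}
and that the right-hand side is unchanged if we restrict the supremum to $f$ with $f(0)=0$, since $P-Q$ is a signed measure of total mass zero. Every such $f$ satisfies $|f(x)|=|f(x)-f(0)|\leq|x|\leq1+|x|^q=h_q(x)$ for all $q\geq1$, and, being continuous, $f$ is Borel measurable.

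Next, fix $q>1$ and take the constants $a>0$ and $C_q>0$ furnished by \eqref{Ergodicity} for this $q$. For any $1$-Lipschitz $f$ with $f(0)=0$, the pointwise bound $|f|\leq h_q$ together with the definition of $\|\cdot\|_{h_q}$ gives
\begin{equation*}
\left|\int_\mbbr f\,d\big(P_t(x,\cdot)-\pi_0\big)\right|\leq\big\|P_t(x,\cdot)-\pi_0\big\|_{h_q}\leq C_q\exp(-at)\,h_q(x).
\end{equation*}
Taking the supremum over all such $f$ and invoking the duality formula yields $W(P_t(x,\cdot),\pi_0)\leq C_q\exp(-at)(1+|x|^q)$, which is the assertion; the argument in fact works for every $q\geq1$.

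The only point needing care, and the main (mild) obstacle, is the applicability of Kantorovich--Rubinstein duality, i.e.\ the finiteness of the first moments of $P_t(x,\cdot)$ and of $\pi_0$. Finiteness of $E^x[|X_t|]$ (indeed of all moments $E^x[|X_t|^q]$) is the standard Gronwall-type moment estimate for \eqref{tSDE}, valid since the coefficients are Lipschitz and $Z$ possesses moments of every order. For $\pi_0$ I would test \eqref{Ergodicity} against the bounded continuous truncations $f_M(\cdot):=\min(|\cdot|,M)$: since $|f_M|\leq h_q$, one gets $\int_\mbbr f_M\,d\pi_0\leq E^x[|X_t|]+C_q\exp(-at)h_q(x)$ uniformly in $M$, so monotone convergence gives $\int_\mbbr|y|\,\pi_0(dy)<\infty$. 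With both first moments finite the duality applies and the two displayed estimates above complete the proof.
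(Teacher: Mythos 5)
Your proof is correct and follows essentially the same route as the paper: Kantorovich--Rubinstein duality, normalizing the $1$-Lipschitz test functions so that $f(0)=0$, dominating them by $h_q$, and then invoking the weighted ergodicity bound \eqref{Ergodicity}. The only difference is your explicit verification that the first moments of $P_t(x,\cdot)$ and $\pi_0$ are finite so that the duality applies, a point the paper leaves implicit.
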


\begin{proof}
We introduce the following Lipschitz semi-norm for a suitable real-valued function $f$ on $\mbbr$: 
\begin{equation*}
||f||_L:=\sup\{|f(x)-f(y)|/|x-y|: x\neq y \ \mbox{in} \ \mbbr \}.
\end{equation*}
From Kantorovich-Rubinstein theorem (cf. \cite[Theorem 11.8.2]{Dud02}) and Assumption \ref{Stability}, it follows that for all $x\in\mbbr$,
\begin{align*}
W(P_t(x,\cdot),\pi_0)&=\sup\left\{\left|\int_\mbbr f(y) \{P_t(x,dy)-\pi_0(dy)\}\right|: ||f||_L\leq1\right\}\\
&=\sup\left\{\left|\int_\mbbr (f(y)-f(0)) \{P_t(x,dy)-\pi_0(dy)\}\right|: ||f||_L\leq1\right\}\\
&\leq \sup\left\{\left|\int_\mbbr h(y) \{P_t(x,dy)-\pi_0(dy)\}\right|: |h(y)|\leq 1+|y|^q\right\}\\
&\leq C_q\exp(-at)(1+|x|^q).
\end{align*}
\end{proof}

\medskip
\noindent\textbf{Proof of Proposition \ref{EPE}}
\indent It is enough to check the conditions of \cite[Theorem 3.1.1 and Theorem 3.1.3]{VerKul11} for $p_\gam=p_\al=1$.
As was mentioned in the proof of Theorem \ref{TPE}, 
\begin{equation*}
g_1(x):=-\p_\gam c(x,\gam^\star)(c^2(x,\gam^\star)-C^2(x))/c^3(x,\gam^\star),
\end{equation*}
and 
\begin{equation*}
g_2(x):=-\p_\al a(x,\al^\star)(A(x)-a(x,\al^\star))/c^2(x,\gam^\star)
\end{equation*}
are centered.
In the following, we give the proof concerning $g_1$ and omit its index $1$ for simplicity.
The regularity conditions on the coefficients imply that there exist positive constants $L$ and $D$ such that 
\begin{equation*}
|g(x)-g(y)|\leq D (2+|x|^{L}+|y|^{L}) |x-y|.
\end{equation*}
Making use of the trivial inequalities $|x-y|^{l}\leq|x|^{l}+|y|^{l}$ and $|x|^l\leq 1\vee |x|^{L+l}$ for any $L>0$, $l\in(0,1)$ and $x,y\in\mbbr$, we have
\begin{equation*}
 \sup_{x,y\in\mbbr, x\neq y}\frac{|g(x)-g(y)|}{(2+|x|^{L+1-1/p}+|y|^{L+1-1/p}) |x-y|^{1/p}}<\infty,
\end{equation*}
for any $p>1$.
Recall that we put $h_L(x)=1+|x|^L$ in Assumption \ref{Stability}.
The inequality \eqref{Ergodicity} gives
\begin{equation*}
\int_\mbbr h_L(y) P_t(x,dy) \leq ||P_t(x,\cdot)-\pi_0(\cdot)||_{h_L}+\int_\mbbr (1+|y|^L) \pi_0(dy)\leq \left(C_L+\int_\mbbr (1+|y|^L) \pi_0(dy)\right) h_L(x).
\end{equation*}
We write $L'=L+1-1/p$ for abbreviation.
Building on this estimate and the previous lemma, the conditions of \cite[Theorem 3.1.1 and Theorem 3.1.3]{VerKul11} are satisfied with 
\begin{align*}
&p=p, \ q=\frac{p}{p-1}, \ d(x,y)=|x-y|, \ r(t)=\exp(-at), \ \phi(x)=1+|x|^{L'},\\
&\psi(x)=2^{q-1}\left(C_{qL'}+\int_\mbbr h_{qL'}(y)\pi_0(dy)\right) h_{qL'}(x),\\
&\chi(x)=2^{q^2-1}\left(C_{qL'}+\int_\mbbr h_{qL'}(y) \pi_0(dy)\right)^q\left(C_{q^2L'}+\int_\mbbr h_{q^2L'}(y)\pi_0(dy)\right)h_{q^2L'}(x),
\end{align*}
and here these symbols correspond to the ones used in \cite{VerKul11}.
As for $g_2$, the conditions can be checked as well.
Hence the desired result follows. $\square$

%
%

\medskip
To derive the asymptotic normality of $\tes$, the following CLT-type theorem for stochastic integrals with respect to Poisson random measures will come into the picture:
\begin{Lem}\label{CLTPRM}
Let $N(ds,dz)$ be a Poisson random measure associated with one-dimensional L\'{e}vy process defined on a stochastic basis $(\Omega, \mcf,(\mcf_t)_{t>0},P)$ whose L\'{e}vy measure is written as $\nu_0$.
Assume that a continuous vector-valued function $f$ on $\mbbr_+\times\mbbr\times\mbbr$ and a $\mcf_t$-predictable process $H_t$ satisfy:
\begin{enumerate}
\item For all $T>0$ and $k=2,4$, 
\begin{equation*}
E\left[\int_0^T\int_\mbbr |f(T,H_s,z)|^k\nu_0(dz)ds\right]<\infty,
\end{equation*}
and their exists a positive definite matrix $\Sig$ such that 
\begin{equation*}
E\left[\int_0^T\int_\mbbr f(T,H_s,z)^{\otimes2}\nu_0(dz)ds\right]\to \Sig,
\end{equation*}
as $T\to\infty$;
\item there exists $\delta>0$ such that 
\begin{equation*}
E\left[\int_0^T\int_\mbbr |f(T,H_s,z)|^{2+\delta}\nu_0(dz)ds\right]\to0,
\end{equation*}
as $T\to\infty$.
\end{enumerate}
Then, for the associated compensated Poisson random measure $\tilde{N}(ds,dz)$, we have
\begin{equation*}
\int_0^T\int_\mbbr f(T,H_s,z)\tilde{N}(ds,dz)\cil N(0,\Sig),
\end{equation*}
as $T\to\infty$.
 \end{Lem}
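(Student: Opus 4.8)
The plan is to read the conclusion as a martingale central limit theorem for a triangular array of purely discontinuous martingales. For each fixed $T>0$ put
\begin{equation*}
M^T_t:=\int_0^t\int_\mbbr f(T,H_{s-},z)\tilde{N}(ds,dz),\qquad t\in[0,T],
\end{equation*}
so that the target quantity is exactly $M^T_T$. Hypothesis (1) with $k=2$ says the integrand lies in the space of predictable functions that are square-integrable with respect to $dP\otimes ds\otimes\nu_0(dz)$; hence $M^T$ is a well-defined \cadlag, purely discontinuous, square-integrable $\mcf_t$-martingale with $M^T_0=0$, predictable quadratic covariation
\begin{equation*}
\langle M^T\rangle_t=\int_0^t\int_\mbbr f(T,H_{s-},z)^{\otimes2}\nu_0(dz)ds,
\end{equation*}
and jumps $\Delta M^T_s=f(T,H_{s-},\Delta Z_s)\mathbf{1}_{\{\Delta Z_s\neq0\}}$. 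I would emphasise that the dependence of the integrand on the terminal horizon $T$ is precisely what forces a triangular-array formulation rather than a purely functional one.

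Next, I would invoke a Rebolledo-type martingale CLT (see, e.g., \cite{JacShi03}): its conclusion yields $M^T_T\cil N(0,C)$ as soon as (a) $\langle M^T\rangle_T\cip C$, and (b) the predictable Lindeberg condition holds, which here --- after passing to the compensator --- amounts to
\begin{equation*}
\int_0^T\int_\mbbr |f(T,H_{s-},z)|^2\mathbf{1}_{\{|f(T,H_{s-},z)|>\ep\}}\,\nu_0(dz)ds\cip0\qquad\mbox{for every }\ep>0.
\end{equation*}
Condition (b) follows at once from hypothesis (2): since $\mathbf{1}_{\{|x|>\ep\}}\le\ep^{-\del}|x|^{\del}$, the left-hand side is bounded by $\ep^{-\del}\int_0^T\int_\mbbr|f(T,H_{s-},z)|^{2+\del}\nu_0(dz)ds$, a nonnegative random variable whose expectation tends to $0$ by (2) and which therefore tends to $0$ in probability. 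For condition (a), hypothesis (1) supplies $E[\langle M^T\rangle_T]\to C$, and the $k=4$ bound in (1) is used to control the compensation error $[M^T]_T-\langle M^T\rangle_T$; in the applications of this lemma $H$ is a function of the predictable process $X_{-}$, so $\langle M^T\rangle_T$ is an additive functional of the ergodic process $X$ and Assumption \ref{Stability} upgrades this mean convergence to convergence in probability.

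The main obstacle is condition (a): mere convergence in mean of the predictable bracket does not pin down a deterministic Gaussian limit --- a random sub-limit of $\langle M^T\rangle_T$ would yield a mixed-normal law --- so one genuinely has to establish $\langle M^T\rangle_T\cip C$, and this is where the ergodicity and mixing of the underlying process (Assumption \ref{Stability} and the Wasserstein estimate obtained just above) rather than hypothesis (1) alone do the work. The remaining points are routine: that $M^T$ is a true (not merely local) $L^2$-martingale follows from hypothesis (1) with $k=2$; measurability of the plugged-in integrand and finiteness of all the integrals follow from continuity of $f$ and the polynomial moment bounds on $X$; and matching the abstract hypotheses to the precise CLT one quotes only requires translating between the pathwise and predictable forms of the Lindeberg condition, which is harmless since both are dominated by the compensator appearing in (2).
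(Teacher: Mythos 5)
Your argument takes a genuinely different route from the paper's. You read the statement as a triangular-array martingale CLT: with $M^T_t:=\int_0^t\int_\mbbr f(T,H_{s-},z)\tilde{N}(ds,dz)$ you invoke a Rebolledo/Jacod--Shiryaev-type theorem, checking the conditional Lindeberg condition from hypothesis (2) via $\mathbf{1}_{\{|x|>\ep\}}\le\ep^{-\del}|x|^{\del}$ (that step is correct, as is your identification of $\langle M^T\rangle$), and requiring $\langle M^T\rangle_T\cip C$. The paper instead argues by hand, following Daley--Vere-Jones: with $X_1(t)=\int_0^t\int_\mbbr f\,\tilde{N}(ds,dz)$ and $X_2(t)=\int_0^t\int_\mbbr |f|^2\nu_0(dz)ds$ it stops at $S=\inf\{t:X_2(t)\ge C\}$, applies It\^o's formula to $\zeta(u,t)=\exp\{iuX_1(t\wedge S)+\frac{u^2}{2}X_2(t\wedge S)\}$, kills the stochastic-integral term by optional sampling, bounds the remainder using the elementary estimate for the complex exponential together with hypothesis (2), and finally shows $X_1(T\wedge S)-X_1(T)\cip 0$ via Lenglart's inequality and the isometry. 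The stopping-time/characteristic-function device is thus the paper's substitute for your condition (a); your use of the $k=4$ moment to compare $[M^T]$ with $\langle M^T\rangle$ is unnecessary if the CLT is quoted in its predictable-bracket form, but harmless.

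On the point you single out as the main obstacle you are right, and being explicit about it is a virtue rather than a defect: hypotheses (1)--(2) control only $E[\langle M^T\rangle_T]$, and mean convergence of the bracket cannot by itself force a deterministic Gaussian limit. For instance, $f(T,h,z)=\sqrt{2/(T+1)}\,zh$ with $H_s\equiv\mathbf{1}_B$, $B\in\mcf_0$ independent of $N$, $P(B)=1/2$, satisfies (1)--(2) with $C=1$ yet yields the mixture $\frac12\delta_0+\frac12 N(0,2)$ in the limit; so the lemma should really be read with $\langle M^T\rangle_T\cip C$, which in the paper's application does hold by ergodicity (Assumption \ref{Stability}) and the moment bounds. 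Consequently what you prove is the lemma under that strengthened hypothesis rather than literally under (1)--(2); but note that the paper's own proof is in the same position: its final step needs $E[\int_{T\wedge S}^{T}\int_\mbbr |f|^2\nu_0(dz)ds]=E[(X_2(T)-C)^{+}]\to 0$, which is asserted to be ``clear from Assumption (1)'' but in fact requires $X_2(T)\to C$ in $L^1$ (equivalently, in probability together with the stated mean convergence) --- exactly the ingredient you import from ergodicity. So your route is sound and complete where the lemma is actually applied; just state condition (a) as an explicit hypothesis (or verify it from the ergodic theorem at the point of application) rather than leaving it to the surrounding assumptions.
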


\begin{proof}
By Cramer-Wold device, it is sufficient to show only one-dimensional case.
This proof is almost the same as \cite[Theorem 14. 5. I]{DalVer08}.
For notational brevity, we set
\begin{equation*}
X_1(t):=\int_0^{t}\int_\mbbr f(T,H_s,z)\tilde{N}(ds,dz), \quad X_2(t):=\int_0^{t}\int_\mbbr |f(T,H_s,z)|^2\nu_0(dz)ds,
\end{equation*}
Introduce a stopping time $S:=\inf\{t>0: X_2(t)\geq \Sig\}$.
Note that $X_2(S)=\Sig$ because $X_2(t)$ is continuous.
Define a random function $\zeta(u,t)$ by
\begin{equation*}
\zeta(u,t)=\exp\left\{iuX_1(t\wedge S)+\frac{u^2}{2}X_2(t\wedge S)\right\}.
\end{equation*}
Applying It\^{o}'s formula, we obtain
\begin{align*}
\zeta(u,T)&=1+iu\int_0^{T\wedge S}\zeta(u,s-)dX_1(s)+\frac{u^2}{2}\int_0^{T\wedge S}\zeta(u,s-)dX_2(s)\\
&+\sum_{0<s\leq T\wedge S}(\zeta(u,s-)\exp\left\{iu\D X_1(s)\right\}-\zeta(u,s-)-iu\zeta(u,s-)\D X_1(s))\\
&=1+\int_0^{T\wedge S}\int_\mbbr\zeta(u,s-)\left(\exp\left\{iuf(T,H_s,z)\right\}-1\right)\tilde{N}(ds,dz)\\
&+\int_0^{T\wedge S}\int_\mbbr\zeta(u,s-)\left(\exp\left\{iuf(T,H_s,z)\right\}-1-iuf(T,H_s,z)+\frac{u^2}{2}|f(T,H_s,z)|^2\right)\nu_0(dz)ds.
\end{align*}
For later use, we here present the following elementary inequality (cf. \cite{Dur10}): for all $u\in\mbbr$ and $n\in\mbbn\cup\{0\}$,
\begin{equation}\label{yu:ele}
\left|\exp(iu)-\sum_{j=0}^n\frac{(iu)^j}{j!}\right|\leq \frac{|u|^{n+1}}{(n+1)!}\wedge\frac{2|u|^n}{n!}.
\end{equation}
By the definition of $S$, we have $|\zeta(u,T)|\leq\exp\left\{u^2\Sig/2\right\}$.
Since $\int_0^T\int_\mbbr\zeta(u,s-)\left(\exp\left\{iuf(T,H_s,z)\right\}-1\right)\tilde{N}(ds,dz)$ is an $L_2$-martingale (cf. \cite[Section 4]{App09}) from these estimates, the optional sampling theorem implies that
\begin{equation*}
E\left[\int_0^{T\wedge S}\int_\mbbr\zeta(u,s-)\left(\exp\left\{iuf(T,H_s,z)\right\}-1\right)\tilde{N}(ds,dz)\right]=0.
\end{equation*} 
Next we show that
\begin{equation*}
E\left[\int_0^{T\wedge S}\int_\mbbr\zeta(u,s-)\left(\exp\left\{iuf(T,H_s,z)\right\}-1-iuf(T,H_s,z)+\frac{u^2}{2}|f(T,H_s,z)|^2\right)\nu_0(dz)ds\right]\to0.
\end{equation*}
Again using the above estimates, we have
\begin{align*}
&\left|E\left[\int_0^{T\wedge S}\int_\mbbr\zeta(u,s-)\left(\exp\left\{iuf(T,H_s,z)\right\}-1-iuf(T,H_s,z)+\frac{u^2}{2}|f(T,H_s,z)|^2\right)\nu_0(dz)ds\right]\right|\\
&\leq E\left[\int_0^{T\wedge S}\int_\mbbr\exp\left\{\frac{u^2}{2}\Sig\right\}\left(\frac{|uf(T,H_s,z)|^3}{6}\wedge|uf(T,H_s,z)|^2\right)\nu_0(dz)ds\right]\\
&\leq C_\delta\exp\left\{\frac{u^2}{2}\Sig\right\}E\left[\int_0^T\int_\mbbr |uf(T,H_s,z)|^{2+\delta}\nu_0(dz)ds\right]\to0,
\end{align*}
where $C_\delta$ is a positive constant such that 
\begin{equation*}
\frac{|x|^3}{6}\wedge|x|^2\leq C_\delta |x|^{2+\delta}
\end{equation*}
for all $x\in\mbbr$.
At last we observe that $X_1(T\wedge S)-X_1(T)\overset{P}\to0$.
In view of Lenglart's inequality and the isometry property of stochastic integral with respect to Poisson random measure (cf. \cite[Section 4]{App09}),
it suffices to show $E[\int_{T\wedge S}^T \int_\mbbr |f(T,H_s,z)|^2\nu_0(dz)ds]\to0$.
However the latter convergence is clear from Assumption (1).
Hence the proof is complete.
\end{proof}

\medskip
Next we show the following lemma which gives the fundamental small time moment estimate of $X$:
\begin{Lem}\label{FEV}
Under Assumptions \ref{Moments}-\ref{Stability}, it follows that
\begin{equation}\label{ME1}
E_{j-1}[|X_s-X_{j-1}|^p]\lesssim h_n (1+|X_{j-1}|^{p}),
\end{equation}
for any positive constant $p\in(1\vee\beta,2)$ and $s\in(t_{j-1},t_j]$.
\end{Lem}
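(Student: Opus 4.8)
The plan is to decompose $X_s-X_{j-1}$ over the interval $(t_{j-1},s]$ into its drift part and its purely-discontinuous martingale part, estimate each in conditional $L^p$, and recombine via $|a+b|^p\le 2^{p-1}(|a|^p+|b|^p)$ (valid since $p\ge1$). As a preliminary I would record the standard a priori estimate
\[
E^{j-1}\Big[\sup_{u\in[t_{j-1},t_j]}|X_u|^q\Big]\lesssim 1+|X_{j-1}|^q,\qquad q\ge2,
\]
with the implied constant independent of $j\le n$ and $n$; this follows from the usual Gronwall argument applied after Kunita's inequality, using only that $h_n$ is eventually bounded, that $A$ and $C$ are of linear growth, and that $\int_\mbbr|z|^q\nu_0(dz)<\infty$ for $q\ge2$ (finite large-jump moments by Assumption \ref{Moments}-(1), and $\int_{|z|\le1}|z|^q\nu_0(dz)<\infty$ since $q\ge2>\beta$). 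By Jensen's inequality this yields $E^{j-1}[|X_u|^p]\lesssim 1+|X_{j-1}|^p$ for $u\in[t_{j-1},t_j]$ and $p\in(1,2)$.

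Next, since $Z$ is a pure-jump L\'{e}vy process with $E[Z_1]=0$ and $E[|Z_1|]<\infty$, its L\'{e}vy--It\^{o} representation is $Z_t=\int_0^t\int_\mbbr z\,\tilde N(du,dz)$ for the compensated Poisson random measure $\tilde N$ of $Z$, so that
\[
X_s-X_{j-1}=\int_{t_{j-1}}^s A(X_u)\,du+\int_{t_{j-1}}^s\int_\mbbr C(X_{u-})z\,\tilde N(du,dz)=:I^{(1)}_s+I^{(2)}_s.
\]
For $I^{(1)}_s$, Jensen's inequality gives $|I^{(1)}_s|^p\le h_n^{p-1}\int_{t_{j-1}}^s|A(X_u)|^p\,du$; using the linear growth of $A$ and the a priori bound, $E^{j-1}[|I^{(1)}_s|^p]\lesssim h_n^p(1+|X_{j-1}|^p)\le h_n(1+|X_{j-1}|^p)$ for large $n$. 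For $I^{(2)}_s$, the crucial point is to avoid the naive Burkholder bound $E^{j-1}[|I^{(2)}_s|^p]\lesssim E^{j-1}\big[[I^{(2)}]_s^{p/2}\big]$ with $[I^{(2)}]_s$ of order $h_n$, which would only produce the insufficient rate $h_n^{p/2}$. Instead, after the Burkholder step I would use that, $Z$ being pure-jump, $[I^{(2)}]_s=\sum_{t_{j-1}<u\le s}|\D I^{(2)}_u|^2$, and that $p/2<1$, so $[I^{(2)}]_s^{p/2}\le\sum_{t_{j-1}<u\le s}|\D I^{(2)}_u|^p=\sum_{t_{j-1}<u\le s}|C(X_{u-})|^p|\D Z_u|^p$. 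Taking $E^{j-1}$ and applying the compensation formula gives
\[
E^{j-1}[|I^{(2)}_s|^p]\lesssim\Big(\int_\mbbr|z|^p\nu_0(dz)\Big)\,E^{j-1}\Big[\int_{t_{j-1}}^s|C(X_u)|^p\,du\Big]\lesssim h_n(1+|X_{j-1}|^p),
\]
where $\int_\mbbr|z|^p\nu_0(dz)<\infty$ precisely because $p>\beta$ controls the small jumps while Assumption \ref{Moments}-(1) controls the large ones, and where the last step again uses the linear growth of $C$ and the a priori bound. Combining the two estimates with the elementary inequality above yields \eqref{ME1} with, say, $K=p$.

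The only genuinely delicate point is obtaining the linear-in-$h_n$ rate for $I^{(2)}_s$: this is exactly where the restriction $p<2$ (equivalently $p/2<1$) is used, via the subadditivity of $t\mapsto t^{p/2}$ applied to the atoms of the quadratic variation, together with the observation that $p>\beta$ is what makes $\int_\mbbr|z|^p\nu_0(dz)$ finite. The drift estimate, the a priori moment bounds, and the conditional Burkholder--Davis--Gundy inequality are all routine.
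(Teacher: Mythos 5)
Your proof is correct. The core of your argument for the jump part --- Burkholder--Davis--Gundy, then writing the quadratic variation of the purely discontinuous integral as the sum of its jump atoms, using $p/2<1$ to pull the power inside the sum, and finishing with the compensation formula together with $\int_\mbbr|z|^p\nu_0(dz)<\infty$ (finite precisely because $p>\beta$ handles the small jumps and Assumption \ref{Moments}-(1) the large ones) --- is exactly the mechanism the paper uses. Where you differ is in the surrounding organization: the paper first freezes the coefficients at $X_{j-1}$, invokes the small-time moment asymptotics of Figueroa-L\'opez \cite[Theorem 1.1]{Fig08} to get the leading term $h_n|C_{j-1}|^p\int_\mbbr|z|^p\nu_0(dz)$, applies the BDG/subadditivity/compensation trick only to the remainder $\intj(C_{s-}-C_{j-1})dZ_s$, and then closes a self-referencing inequality for $s\mapsto E^{j-1}[|X_s-X_{j-1}|^p]$ by Gronwall. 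You instead keep the unfrozen integrand $C(X_{u-})$ throughout and replace the Gronwall-on-the-increment step by the standard conditional a priori bound $E^{j-1}[\sup_{u\in[t_{j-1},t_j]}|X_u|^q]\lesssim 1+|X_{j-1}|^q$ (itself obtained by Kunita plus Gronwall). Your route is self-contained in that it avoids the external small-time asymptotics result and any coefficient-freezing, at the cost of importing the sup-moment estimate; the paper's route isolates the exact leading small-time contribution of the frozen noise and only needs moment bounds at fixed times, which is marginally lighter machinery. Both uses of $p\in(1\vee\beta,2)$ --- $p>1$ for BDG and Jensen on the drift, $p>\beta$ for integrability of $|z|^p$ near the origin, $p<2$ for the subadditivity of $t\mapsto t^{p/2}$ --- coincide, so the proposals are equivalent in substance.
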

\begin{proof}
Recall that $\int |z|^p\nu_0(dz)<\infty$ from Assumption \ref{Moments}.
By Lipschitz continuity of the coefficients and \cite[Theorem 1.1]{Fig08}, it follows that
\begin{align*}
&E_{j-1}[|X_s-X_{j-1}|^p]\\
&\lesssim E_{j-1}\left[\left|\int_{t_{j-1}}^s (A_{u}-A_{j-1})du+\int_{t_{j-1}}^s(C_{u-}-C_{j-1})dZ_{u}\right|^p+h_n^p |A_{j-1}|^p+h_n|C_{j-1}|^p\int_\mbbr |z|^p\nu_0(dz)\right]+o_p(h_n)\\
&\lesssim h_n\left(1+|X_{j-1}|^{p}+o_p(1)\right)+h_n^{p-1}\intj E_{j-1}[|X_s-X_{j-1}|^p] ds+E_{j-1}\left[\left|\intj(C_{s-}-C_{j-1})dZ_s\right|^p\right].
\end{align*}
Applying Burkholder-Davis-Gundy's inequality (cf. \cite[Theorem 48]{Pro04-2}), we have
\begin{align*}
E_{j-1}\left[\left|\intj(C_{s-}-C_{j-1})dZ_s\right|^p\right]&\lesssim E_{j-1}\left[\left(\intj\int_\mbbr (C_{s-}-C_{j-1})^2z^2N(ds,dz)\right)^{p/2}\right]\\
&= E_{j-1}\left[\left(\sum_{t_{j-1}\leq s<t_{j}} (C_{s-}-C_{j-1})^2(Z_s-Z_{s-})^2\right)^{p/2}\right]\\
&\leq E_{j-1}\left[\sum_{t_{j-1}\leq s<t_{j}} |C_{s-}-C_{j-1}|^p|Z_s-Z_{s-}|^p\right]\\
&= \intj E_{j-1}[|X_s-X_{j-1}|^p]ds\int_\mbbr |z|^p\nu_0(dz),
\end{align*}
for the Poisson random measure $N(ds,dz)$ associated with $Z$.
Hence Gronwall's inequality gives \eqref{ME1}.
\end{proof}

\medskip
\noindent\textbf{Proof of Theorem \ref{AN}}
\indent According to Cramer-Wold device, it is enough to show for $p_\gam=p_\al=1$ .
From a similar estimates used in Theorem \ref{TPE}, we have 
\begin{align}
\sqrt{T_n}\p_\gam\mbbg_{1,n}(\gam^\star)&=-\frac{2}{\sqrt{T_n}}\sumj\left\{\frac{\p_\gam c_{j-1}}{c^3_{j-1}}\left(h_nc^2_{j-1}-(\D_j X)^2\right)\right\}\nn\\
&=-\frac{2}{\sqrt{T_n}}\sumj\left\{\frac{\p_\gam c_{j-1}}{c^3_{j-1}}(h_nc^2_{j-1}-C^2_{j-1}(\D_j Z)^2)\right\}+o_p(1)\nn\\
&=-\frac{2}{\sqrt{T_n}}\sumj\left\{\frac{\p_\gam c_{j-1}}{c^3_{j-1}}C^2_{j-1}(h_n-(\D_j Z)^2)\right\}-\frac{2}{\sqrt{T_n}}\int_0^{T_n}\frac{\p_\gam c_s}{c^3_s}(c_s^2-C_s^2)ds\nn\\
&-\frac{2}{\sqrt{T_n}}\sumj\intj\left\{\frac{\p_\gam c_{j-1}}{c^3_{j-1}}(c_{j-1}^2-C_{j-1}^2)-\frac{\p_\gam c_s}{c^3_s}(c_s^2-C_s^2)\right\}ds+o_p(1)\nn\\
&=:\mbbf_{1,n}+\mbbf_{2,n}+\mbbf_{3,n}+o_p(1) \label{dec}.
\end{align}
We evaluate each term separately below.
Rewriting $\mbbf_{1,n}$ in a stochastic integral form via It\^{o}'s formula, we have
\begin{align*}
\mbbf_{1,n}&=-\frac{2}{\sqrt{T_n}}\sumj \intj\int_\mbbr \frac{\p_\gam c_{s-}}{c^3_{s-}}C_{s-}^2z^2\tilde{N}(ds,dz)-\frac{2}{\sqrt{T_n}}\sumj \intj\int_\mbbr \left(\frac{\p_\gam c_{j-1}}{c_{j-1}^3}C_{j-1}^2-\frac{\p_\gam c_{s-}}{c^3_{s-}}C_{s-}^2\right)z^2\tilde{N}(ds,dz)\\
&-\frac{4}{\sqrt{T_n}}\sumj\frac{\p_\gam c_{j-1}}{c_{j-1}^3}C_{j-1}^2\intj (Z_{s-}-Z_{j-1})dZ_s.
\end{align*}
for the compensated Poisson random measure $\tilde{N}(ds,dz)$ associated with $Z$.
Using Burkholder's inequality and the isometry property, it follows that for a positive constant $K$, 
\begin{align*}
&E\left[\left(\frac{1}{\sqrt{T_n}}\sumj \intj\int_\mbbr \left(\frac{\p_\gam c_{j-1}}{c_{j-1}^3}C_{j-1}^2-\frac{\p_\gam c_{s-}}{c^3_{s-}}C_{s-}^2\right)z^2\tilde{N}(ds,dz)\right)^2\right]\\
&\lesssim \frac{1}{T_n}\sumj E\left[\left(\intj\int_\mbbr \left(\frac{\p_\gam c_{j-1}}{c_{j-1}^3}C_{j-1}^2-\frac{\p_\gam c_{s-}}{c^3_{s-}}C_{s-}^2\right)z^2\tilde{N}(ds,dz)\right)^2\right]\\
&\lesssim \frac{1}{T_n}\sumj \intj E\left[\left(\int_0^1\p_x\left(\frac{\p_\gam c}{c^3}C^2\right)(X_{j-1}+u(X_s-X_{j-1}))du\right)(X_s-X_{j-1})\right]ds\\
&\lesssim \frac{1}{T_n}\sumj\intj \sqrt{\sup_{t\in\mbbr^+} E[1+|X_t|^K]}\sqrt{E[(X_s-X_{j-1})^2]}ds\\
&\lesssim \sqrt{h_n},
\end{align*}
and that
\begin{equation*}
E\left[\left|\intj (J_{s-}-J_{j-1})dJ_s\right|^2\right]\lesssim \intj E[|J_{s-t_{j-1}}|^2]ds\leq h_n^2.
\end{equation*}
Hence 
\begin{equation*}
\mbbf_{1,n}=-\frac{2}{\sqrt{T_n}}\sumj \intj\int_\mbbr \frac{\p_\gam c_{s-}}{c^3_{s-}}C_{s-}^2z^2\tilde{N}(ds,dz)+o_p(1).
\end{equation*}
Let us turn to observe $\mbbf_{2,n}$.
Let $f_{i,t}:=f_i(X_t)$ for $i=1,2$, and especially, let $f_{i,j}:=f_i(X_{t_j})$.
From Proposition \ref{EPE}, we obtain
\begin{align*}
\mbbf_{2,n}&=-\frac{2}{\sqrt{T_n}}\sumj \left(f_{1,j}- f_{1,j-1}+\intj \frac{\p_\gam c_s}{c^3_s}(c_s^2-C_s^2)ds\right)-\frac{2}{\sqrt{T_n}}(f_{1,n}-f_{1,0})\\
&=-\frac{2}{\sqrt{T_n}}\sumj \left(f_{1,j}- f_{1,j-1}+\intj \frac{\p_\gam c_s}{c^3_s}(c_s^2-C_s^2)ds\right)+o_p(1).
\end{align*}
For abbreviation, we simply write 
\begin{equation*}
\xi_{1,j}(t)=f_{1,t}- f_{1,j-1}+\int_{t_{j-1}}^t \p_\gam c_s(c_s^2-C_s^2)/c^3_sds.
\end{equation*}
According to Proposition \ref{EPE}, the weighted H\"{o}lder continuity of $f$, and Lemma \ref{FEV}, $\{\xi_{1,j}(t), \mcf_{t_{j-1}+t}:t\in[0,h_n]\}$ turns out to be an $L_2$-martingale.
Thus the martingale representation theorem \cite[Theorem I\hspace{-.1em}I\hspace{-.1em}I. 4. 34]{JacShi03} implies that there exists a predictable process $s\mapsto  \tilde{\xi}_{1,j}(s,z)$ such that
\begin{equation*}
\xi_{1,j}(t)=\int_{t_{j-1}}^t\int_\mbbr \tilde{\xi}_{1,j}(s,z)\tilde{N}(ds,dz).
\end{equation*}
Hence the continuous martingale component of $\xi_{1,j}$ is 0. 
By the property of $f_1$, we can define the stochastic integral $\int_{t_{j-1}}^t\int_\mbbr \left(f_1(X_{s-}+C_{s-}z)-f_1(X_{s-})\right)\tilde{N}(ds,dz)$ on $t\in[t_{j-1},t_j]$ and this process is also an $L_2$-martingale with respect to $\{\mcf_{t_{j-1}+t}:t\in[0,h_n]\}$.
Utilizing \cite[Theorem I. 4. 52]{JacShi03} and \cite[Corollary I\hspace{-.1em}I. 6. 3]{Pro04-2}, we have 
\begin{align*}
& E\left[\left|\frac{1}{\sqrt{T_n}}\sumj \left\{\xi_{1,j}(t_j)-\intj\int_\mbbr \left(f_1(X_{s-}+C_{s-}z)-f_1(X_{s-})\right)\tilde{N}(ds,dz) \right\}\right|^2\right]\\
& \lesssim \frac{1}{T_n}\sumj E\left[\left|\xi_{1,j}(t_j)-\intj\int_\mbbr \left(f_1(X_{s-}+C_{s-}z)-f_1(X_{s-})\right)\tilde{N}(ds,dz) \right|^2\right]\\
& = \frac{1}{T_n}\sumj E\left[\left[\xi_{1,j}(\cdot)-\int_{t_{j-1}}^\cdot\int_\mbbr \left(f_1(X_{s-}+C_{s-}z)-f_1(X_{s-})\right)\tilde{N}(ds,dz)\right]_{t_j}\right]=0.
\end{align*}
Here $[Y_\cdot]_t$ denotes the quadratic variation for any semimartingale $Y$ at time $t$, and we used Burkholder's inequality for a martingale difference between the first line and the second line.
By similar estimates above, we have $\mbbf_{3,n}=o_p(1)$.
Having these arguments in hand, it turns out that
\begin{equation*}
\sqrt{T_n}\p_\gam\mbbg_{1,n}(\gam^\star)=-\frac{2}{\sqrt{T_n}}\int_0^{T_n} \int_\mbbr \left(\frac{\p_\gam c_{s-}}{c^3_{s-}}C_{s-}^2z^2+f_1(X_{s-}+C_{s-}z)-f_1(X_{s-})\right)\tilde{N}(ds,dz)+o_p(1).
\end{equation*}
We can deduce from Assumption \ref{Smoothness} and Proposition \ref{EPE} that there exist positive constants $K,K',K''$ and $\ep_0<1\wedge(2-\beta)$ such that for all $z\in\mbbr$
\begin{align*}
&\sup_t \left\{\frac{1}{t}\int_0^t E\left[\left(\frac{\p_\gam c_{s}}{c^3_{s}}C_{s}^2z^2+f_1(X_{s}-C_{s}z)-f_1(X_{s})\right)^2\right]ds\right\}\\
&\lesssim \sup_t \left\{\frac{1}{t}\int_0^t \left(|z|^{2-\epsilon_0}\vee z^4\right)\left(1+\sup_t E\left[|X_t|^K\right]+\left(1+\sup_t E\left[|X_t|^{K'}\right]\right)|z|^{K''}\right)ds\right\}\\
&\lesssim (|z|^{2-\epsilon_0}\vee z^4)\left(1+|z|^{K''}\right),
\end{align*}
and the last term is $\nu_0$-integrable.
Then, there exist positive constants $K$ and $K'$ (possibly take different values from the previous ones) such that for any $z\in\mbbr$,
\begin{align*}
&\left|\frac{1}{t}\int_0^t E\left[\left(\frac{\p_\gam c_{s}}{c^3_{s}}C_{s}^2z^2+f_1(X_{s}+C_{s}z)-f_1(X_{s})\right)^2\right]ds\right.\\
&\qquad \qquad\qquad\left.-\int_\mbbr\left(\frac{\p_\gam c(y,\gam^\star)}{c^3(y,\gam^\star)}C^2(y)z^2+f_1(y+C(y)z)-f_1(y)\right)^2\pi_0(dy)\right|\\
&=\left| \frac{1}{t}\int_0^t\int_\mbbr \int_\mbbr \left(\frac{\p_\gam c(y,\gam^\star)}{c^3(y,\gam^\star)}C^2(y)z^2+f_1(y+C(y)z)-f_1(y)\right)^2  (P_s(x,dy)-\pi_0(dy)) \eta(dx)ds\right|\\
&\lesssim (|z|^{2-\epsilon_0}\vee z^4)\left(1+ |z|^{K'}\right) \frac{1}{t}\int_0^t\int_\mbbr ||P_s(x,\cdot)-\pi_0(\cdot)||_{h_K}\eta(dx)ds\\
&\to0.
\end{align*}
Thus the dominated convergence theorem and the isometry property give
\begin{align*}
&\lim_{n\to\infty} E\left[\left(\frac{1}{\sqrt{T_n}}\int_0^{T_n} \int_\mbbr \left(\frac{\p_\gam c_{s-}}{c^3_{s-}}C_{s-}^2z^2+f_1(X_{s-}+C_{s-}z)-f_1(X_{s-})\right)\tilde{N}(ds,dz)\right)^2\right]\\
&=\lim_{n\to\infty}\frac{1}{T_n}\int_0^{T_n} \int_\mbbr E\left[\left(\frac{\p_\gam c_{s}}{c^3_{s}}C_{s}^2z^2+f_1(X_{s}+C_{s}z)-f_1(X_{s})\right)^2\right]\nu_0(dz)ds\\
&=\frac{1}{4}\Sig_\gam.
\end{align*}
It follows from Assumption \ref{Stability} and Proposition \ref{EPE} that
\begin{equation*}
\lim_{n\to\infty}E\left[\int_0^{T_n}\int_\mbbr\left|\frac{1}{\sqrt{T_n}}\left(\frac{\p_\gam c_{s}}{c^3_{s}}C_{s}^2z^2+f_1(X_{s}+C_{s}z)-f_1(X_{s})\right)\right|^{2+K} \nu_0(dz)ds\right]\to0.
\end{equation*}
From Taylor expansion around $\gam^\star$, $\p_\al\mbbg_{2,n}(\al)$ is decomposed as:
\begin{align*}
\sqrt{T_n}\p_\al\mbbg_{2,n}(\al^\star)&=\frac{2}{\sqrt{T_n}}\sumj\frac{\p_\al a_{j-1}}{c^2_{j-1}}(\D_j X-h_na_{j-1})+\frac{2}{T_n}\sumj\p_\al a_{j-1}(\D_j X-h_na_{j-1}) \p_\gam c^{-2}_{j-1}\left(\sqrt{T_n}(\ges-\gam^\star)\right)\\
&+\left(\int_0^1\frac{2}{(T_n)^{3/2}}\sumj\p_\al a_{j-1}(\D_j X-h_na_{j-1}) \p_\gam^2 c^{-2}_{j-1}(\gam^\star+u(\ges-\gam^\star))du\right)\left(\sqrt{T_n}(\ges-\gam^\star)\right)^2
\end{align*}
Sobolev's inequality and the tail probability estimates of $\ges$ imply that the third term of the right-hand-side is $o_p(1)$.
Hence a similar manner to the first half leads to
\begin{align*}
&\sqrt{T_n}\p_\al\mbbg_{2,n}(\al^\star)-\frac{2}{T_n}\sumj\p_\al a_{j-1}(\D_j X-h_na_{j-1}) \p_\gam c^{-2}_{j-1}\left(\sqrt{T_n}(\ges-\gam^\star)\right) \\
&=\frac{2}{\sqrt{T_n}}\sumj\frac{\p_\al a_{j-1}}{c^2_{j-1}}(\D_j X-h_na_{j-1})+o_p(1)\\
&=\frac{2}{\sqrt{T_n}}\sumj\frac{\p_\al a_{j-1}}{c^2_{j-1}}C_{j-1}\D_j Z +\frac{2}{\sqrt{T_n}}\int_0^{T_n} \frac{\p_\al a_s}{c^2_s}(A_s-a_s)ds+o_p(1)\\
&=\frac{2}{\sqrt{T_n}}\sumj\intj\int_\mbbr\frac{\p_\al a_s}{c^2_{s-}}C_{s-}\tilde{N}(ds,dz)+\frac{2}{\sqrt{T_n}}\sumj\left(f_{2,j}-f_{2,j-1}+\intj \frac{\p_\al a_s}{c^2_s}(A_s-a_s)ds\right)+o_p(1)\\
&=\frac{2}{\sqrt{T_n}}\int_0^{T_n}\int_\mbbr\left(\frac{\p_\al a_s}{c^2_{s-}}C_{s-}z+f_2(X_{s-}+C_{s-}z)-f_2(X_{s-})\right)\tilde{N}(ds,dz)+o_p(1),
\end{align*}
and we have
\begin{align*}
&\lim_{n\to\infty}E\left[\left(\frac{1}{\sqrt{T_n}}\int_0^{T_n}\int_\mbbr\left(\frac{\p_\al a_s}{c^2_{s-}}C_{s-}z+f_2(X_{s-}+C_{s-}z)-f_2(X_{s-})\right)\tilde{N}(ds,dz)\right)^2\right]=\frac{1}{4}\Sig_\al,\\
&\lim_{n\to\infty}E\left[\int_0^{T_n}\int_\mbbr\left|\frac{1}{\sqrt{T_n}}\left(\frac{\p_\al a_s}{c^2_{s}}C_{s}z+f_2(X_{s}+C_{s}z)-f_2(X_{s})\right)\right|^{2+K}\nu_0(dz)ds\right]=0.
\end{align*}
From the isometry property and the trivial identity $xy=\left\{(x+y)^2-(x-y)^2\right\}/4$ for any $x,y\in\mbbr$, it follows that
\begin{align*}
\lim_{n\to\infty} &E\left[\left(\frac{1}{\sqrt{T_n}}\int_0^{T_n} \int_\mbbr \left(\frac{\p_\gam c_{s-}}{c^3_{s-}}C_{s-}^2z^2+f_1(X_{s-}+C_{s-}z)-f_1(X_{s-})\right)\tilde{N}(ds,dz)\right)\right.\\
&\left.\times\left(\frac{1}{\sqrt{T_n}}\int_0^{T_n}\int_\mbbr\left(\frac{\p_\al a_s}{c^2_{s-}}C_{s-}z+f_2(X_{s-}+C_{s-}z)-f_2(X_{s-})\right)\tilde{N}(ds,dz)\right)\right]=-\frac{1}{4}\Sig_{\al\gam}.
\end{align*}
Hence the moment estimates in the proof of Theorem \ref{TPE}, Lemma \ref{CLTPRM} and Taylor's formula yield that
\begin{align*}
&\ \quad\sqrt{T_n}\begin{pmatrix}-\p_\gam^2 \mbbg_{1,n}(\gam^\star)&0\\ -\frac{2}{T_n}\sumj\p_\al a_{j-1}(\D_j X-h_na_{j-1}) \p_\gam c^{-2}_{j-1}& -\p_\al^2 \mbbg_{2,n}(\al^\star)\end{pmatrix}\begin{pmatrix}\ges-\gam^\star \\ \aes-\al^\star\end{pmatrix}\\
&=\sqrt{T_n}\begin{pmatrix}\p_\gam \mbbg_{1,n}(\gam^\star)\\\p_\al \mbbg_{2,n}(\al^\star)\end{pmatrix}+o_p(1)\overset{\mcl}\longrightarrow N(0,\Sig).
\end{align*}
To achieve the desired result, it suffices to show 
\begin{align*}
\p_\gam^2 \mbbg_{1,n}(\gam^\star)\overset{P}\to \Gam_\gam,\ \p_\al^2 \mbbg_{2,n}(\al^\star)\overset{P}\to \Gam_\al,
\end{align*}
and
\begin{equation*}
 \frac{2}{T_n}\sumj\p_\al a_{j-1}(\D_j X-h_na_{j-1}) \p_\gam c^{-2}_{j-1}\overset{P}\to\Gam_{\al\gam}.
 \end{equation*}
However the first two convergence are straightforward from the proof of Theorem \ref{TPE}, and the last convergence follows from the ergodic theorem.
Thus the proof is complete.

\subsection*{Acknowledgement}
The author would like to thank Professor H. Masuda for his constructive comments.
He is also thank Professor A. M. Kulik for his advice about extended Poisson equation.
Finally, he is grateful to the anonymous referees for their valuable and constructive comments.
This work was supported by JST CREST Grant Number JPMJCR14D7, Japan.

\bibliographystyle{abbrv}

\end{document}